\newtheorem{theorem}{Theorem}[section]
\newtheorem{corollary}[theorem]{Corollary}
\newtheorem{lemma}[theorem]{Lemma}
\newtheorem{remark}[theorem]{Remark}
\numberwithin{equation}{section}
\title[Direct and inverse problems]{Direct and inverse problems for the Schr\"{o}dinger operator in a three-dimensional planar waveguide}
 \author[P. Li]{Peijun Li}
\address{Department of Mathematics, Purdue University, West Lafayette, Indiana
47907, USA}
\email{lipeijun@math.purdue.edu}
  \author[X. Yao]{Xiaohua Yao}
\address{School of Mathematics and Statistics, Central China Normal University,
Wuhan 430079, China}
\email{yaoxiaohua@mail.ccnu.edu.cn}
  \author[Y. Zhao]{Yue Zhao}
\address{School of Mathematics and Statistics, Central China Normal University,
Wuhan 430079, China}
\email{zhaoyueccnu@163.com}
\thanks{The research of PL is supported in part by the NSF grant DMS-1912704. The research of XY is supported in part by NSFC (No. 11771165). The research of YZ is supported in part by NSFC (No. 12001222).}
\subjclass[2010]{35R30, 78A46}
\keywords{the Schr\"{o}dinger operator, waveguide, resolvent estimate, inverse source problem, stability}
\begin{document}

\begin{abstract}
In this paper, we study the meromorphic continuation of the resolvent for the Schr\"{o}dinger operator 
in a three-dimensional planar waveguide. We prove the existence of a resonance-free region and an upper bound for the resolvent. 
As an application, the direct source problem is shown to have a unique solution under some appropriate assumptions. 
Moreover, an increasing stability is achieved for the inverse source problem of the Schr\"{o}dinger 
operator in the waveguide by using limited aperture Dirichlet data only at multiple frequencies. 
The stability estimate consists of the Lipschitz type data discrepancy and the high frequency tail of the
source function, where the latter decreases as the upper bound of the frequency increases. 

\end{abstract}

\maketitle

\section{Introduction}

In this paper, we consider the direct and inverse problems for the Schr\"{o}dinger operator in a three-dimensional planar waveguide. Let $D\subset\mathbb R^3$ be an infinite slab between two parallel hyperplanes $\Gamma_1$ and $\Gamma_2$, as shown in Figure \ref{figure}.  Without loss of generality, we assume that
\[
D = \{x = (x^\prime, x_3)\in\mathbb R^3: x' = (x_1, x_2)\in\mathbb R^2, 0<x_3<L\}, \quad L>0,
\]
and
\[
\Gamma_1 = \{x\in\mathbb R^3: x_3 = L\}, \quad \Gamma_2 = \{x\in\mathbb R^3: x_3 = 0\}.
\]

Consider the following boundary value problem of the Schr\"odinger operator: 
\begin{equation}\label{main_eq}
\begin{split}
-\Delta u(x,\kappa)+ V(x) u(x,\kappa) - \kappa^2 u(x,\kappa) &=f(x),\quad x\in D, \\
u(x, \kappa) &= 0, \quad\quad x\in\Gamma_1\cup\Gamma_2,
\end{split}
\end{equation}
where $\kappa>0$ is the wavenumber, $V$ is the potential function, and $f$ is the source function. Let $\Omega\subset D$ be a convex bounded domain with smooth boundary such that $\partial\Omega = \Gamma\cup\gamma_1\cup\gamma_2$, where $\gamma_i, i=1, 2$ are non-empty closed domains of $\Gamma_i$ and $\Gamma = \partial\Omega\cap D$. 
Furthermore, we assume that $f\in L^2(D)$, $V(x)\in L^\infty(D)$  and they both have compact supports contained in $\overline{\Omega}$. The Sommerfeld radiation condition is imposed to ensure the well-posedness of the
problem:
\begin{align}\label{src}
\lim_{r\to\infty}r^{\frac{1}{2}} (\partial_r u-\mathrm{i}\kappa u)=0,\quad r=|x'|
\end{align}
uniformly in all directions $\hat{x'} = x'/|x'|$. 

In this work, we study both the direct and inverse problems of \eqref{main_eq}. Given the potential $V$ and the source $f$, the direct problem is to investigate the resolvent $(-\Delta + V - \lambda)^{-1}$ in $D$ for $\lambda\in\mathbb C$; the inverse problem is to determine the source $f$ from the given potential $V$ and the boundary measurement $u(x, \kappa)\vert_{\Gamma}$ corresponding to the wavenumber $\kappa$ in a finite interval.

\begin{figure}\label{figure}
\centering
\includegraphics[width=0.4\textwidth]{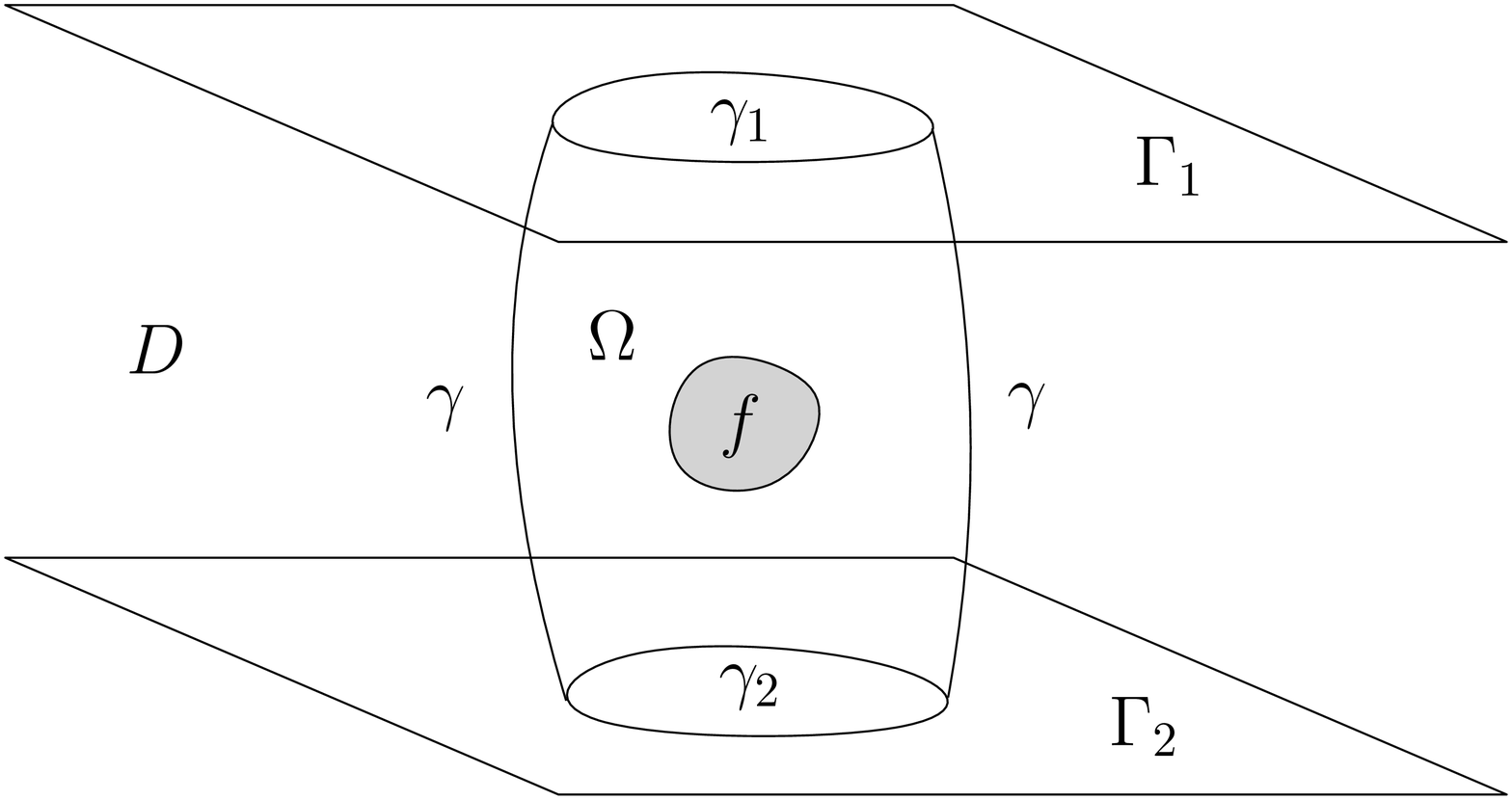}
\caption{The problem geometry. }
\end{figure}

There is extensive literature on the subject of scattering problems in a planar waveguide, see e.g., \cite{AGA, BGWX, collin, marcuse}. An important application of the waveguide problem is to provide a simple but effective model for the propagation of time-harmonic acoustic waves in oceans \cite{AGA, BGWX}, where the scattered field is assumed to satisfy the Neumann and  Dirichlet boundary condition, respectively. Our method can be directly applied to this model problem. We refer to \cite{KLU, LU} and the references cited therein for the corresponding inverse boundary value problems, where the data is the Dirichlet-to-Neumann (DtN) map and the main mathematical tool is the construction of complex geometric optics solutions. However, the analysis on the scattering resonances for the resolvent in this geometry remains open. There are few works on the inverse source problem and the available results are mainly focused on the numerics \cite{AAB, BIT, liu, LXZ}. 

For the direct problem, we first study the free resolvent in $\mathbb R^2$ and its meromorphic 
continuation. We obtain a resonance-free region and the resolvent estimates. Next is to consider the resolvent in a planar waveguide in $\mathbb R^3$. We start with the case of $V = 0$ and utilize the Fourier decomposition with respect to the variable $x_3$, which leads to a series of the scattered fields consisting of the Fourier coefficients. The Fourier coefficients are required to satisfy the two-dimensional Helmholtz equation and the Sommerfeld radiation condition \eqref{src}. Consequently, for any positive wave number, a finite number of the Fourier coefficients propagate to the infinity, while an infinite number of Fourier coefficients decay exponentially. The former correspond to the propagating wave modes and the latter correspond to the evanescent wave modes. Hence, there is an issue for the waveguide scattering in free space: resonances might occur at some special frequencies which give rise to nonuniqueness for the direct problem \cite{AGA}. On the other hand, a key question must be answered when studying the stability of the inverse source problem by using multi-frequency data: whether we can find a resonance-free region which contains an infinite interval of the positive real axis? Clearly, the resonances may also bring difficulties in this aspect. To resolve this issue, we assume that the source function only has a finite number of Fourier coefficients. Meanwhile, this assumption has another important role on solving the direct and inverse problems. When examining the resolvent in the lower complex half-plane, analysis of the series solution for the scattered field shows that it may not converge. The reason is that the analytic continuation of the complex wavenumbers corresponding to the evanescent modes from the upper to the lower complex half-plane will result in exponentially growing Fourier coefficients. The finite band assumption of the source function also helps us handle this issue. Next, for $V\neq 0$, using the perturbation arguments, we present the meromorphic continuation of the resolvent from the upper to the lower complex half-plane, and prove the existence of a resonance-free region and an upper bound for the resolvent, which gives the well-posedness of the direct problem. We believe that under the above assumption, the resonances occurring at special frequencies in a tubular waveguide (see e.g., (2.6) in \cite{BCM}) can also be  dealt with by restricting the complex wavenumber on the domain outside a sufficiently large disk in $\mathbb C$.  

In general, it is well known that there is no uniqueness for inverse source problems at a fixed frequency due to the existence of non-radiating sources \cite{bc-77,ds-82}. Computationally, a more serious issue is the lack of stability, i.e., a small variation of the data might lead to a huge error in the reconstruction. Hence it is crucial to study the stability of the inverse source problems. The first stability result was obtained in \cite{blt} for the inverse source problem of the Helmholtz equation by using multi-frequency data. Later on, the increasing stability was studied for the inverse source problems of various wave equations including the acoustic, elastic, and electromagnetic wave equations, as well as the wave equation with the biharmonic 
Schr\"{o}dinger operator \cite{blz, cheng2016increasing, ei-18, ei-19, li2017increasing, LZZ, LYZ}. A more recent study on the 
stability for the inverse medium problem can be found in \cite{BT}. In this work, we show
an increasing stability for the inverse source problem formulated in a three-dimensional planar waveguide by using 
multi-frequency limited aperture Dirichlet data $u(x, \kappa)\vert_{\Gamma}$. The method is expected to be applicable to the  waveguide problems of infinite unperturbed tubular structures and to electromagnetic waves. We refer to \cite{MS} and \cite{LZ} for the tubular waveguide problem and the inverse source problem of electromagnetic waves in an inhomogeneous medium, respectively. The analysis also sheds light on the study of scattering problems for the Schr\"{o}dinger operator with a periodic potential in the structure of $\mathbb R\times \mathbb T^2$ where $\mathbb T^2 = \mathbb R^2\slash\mathbb Z^2$ is the torus. A typical example of the scattering problem in $\mathbb R\times \mathbb T^2$ is the Maxwell equations in periodic structures. We refer to the monograph \cite{BL} for recent developments in mathematical analysis and computational methods for solving Maxwell's equations in periodic structures. 

Motivated by \cite{LYZ, LZZ}, we consider an eigenvalue problem for the Schr\"{o}dinger operator in $\Omega$ and deduce integral equations, which connect the scattering data $u(x, \kappa)\vert_{\Gamma}$ and the unknown source function $f$. It is required to obtain a bound of the analytic continuation of the data from the given data to the higher frequency data in order to show the stability. We make use of the resolvent of the Schr\"{o}dinger operator in the waveguide to obtain a resonance-free region of the data with respect to the complex wavenumber, which contains an infinite interval of the positive real axis. Furthermore, a corresponding upper bound of the resolvent is needed, which also comes from the analysis of the resolvent. In fact, the key idea for this work is to utilize the scattering theory to study the inverse scattering problem. As mentioned in \cite{CCH}, the theory of scattering resonances is a rich and beautiful part of scattering theory. It studies the meromorphic continuation of the resolvent. A comprehensive account on this subject can be found in the monograph \cite{Dyatlov}. The stability estimate derived in this paper consists of the Lipschitz type of data discrepancy and the high frequency tail of the source function. The latter decreases as the frequency of the data increases, which implies that the inverse problem is more stable when the higher frequency data is used. We also mention that only the Dirichlet data is required for the analysis.

This paper is organized as follows. In Section \ref{ds}, we discuss the direct problem. The free resolvent in $\mathbb R^2$ is discussed. The resolvent is introduced for the Schr\"{o}dinger operator in a planar waveguide in $\mathbb R^3$ and its holomorphic domain and upper bound are obtained, which lead to the well-posedness of the direct problem. Section \ref{ip} is devoted to the stability analysis of the inverse source problem by using multi-frequency limited aperture Dirichlet data. The paper is concluded with some general remarks and possible future work in Section \ref{con}.

\section{The direct problem}\label{ds}

In this section, we discuss the free resolvent in $\mathbb R^2$ and the resolvent for the planar waveguide in $\mathbb R^3$. As a result of the resolvent analysis, the direct problem is shown to have a unique solution. 

\subsection{The free resolvent in $\mathbb R^2$}\label{2}

Denote the free resolvent in $\mathbb R^2$ by $G (\lambda):= (-\Delta - \lambda^2)^{-1}, \lambda\in\mathbb C$, which has the kernel (cf. \cite{FY06})
\begin{align*}
G (\lambda, x, y) =  C e^{{\rm i} \lambda |x - y|} \int_0^\infty e^{-t} t^{-\frac{1}{2}} \Big( \frac{t}{2} - {\rm i}\lambda |x - y| \Big)^{-\frac{1}{2}} {\rm d}t,
\end{align*}
where $C$ is a positive constant. A simple calculation yields 
\begin{align*}
|G (\lambda, x, y)| \lesssim  \frac{e^{-\Im\lambda |x - y|}}{|\lambda|^{\frac{1}{2}}|x - y|^{\frac{1}{2}}} \int_0^\infty e^{-t} t^{-\frac{1}{2}} {\rm d}t \lesssim  
\frac{e^{-\Im\lambda|x - y|}}{|\lambda|^{\frac{1}{2}}|x - y|^{\frac{1}{2}}}.
\end{align*}
Hereafter, the notation $a\lesssim b$ stands for $a\leq Cb,$ where $C>0$ is a generic constant which may change step by step in the proofs.

Then it is easy to verify that when $\lambda\in\mathbb C^+=\{\lambda=\lambda_1+{\rm i}\lambda_2: \lambda_2>0\}$ the operator $G(\lambda)$ is well-defined on $L^2(\mathbb R^{2})$ via the explicit expression
\begin{align}\label{rg}
G (\lambda)(f) = \int_{\mathbb R^{2}} G (\lambda, x, y) f(y){\rm d}y,
\end{align}
and is analytic as a family of operators $G (\lambda): L^2(\mathbb R^{2})\rightarrow L^2(\mathbb R^{2})$.
Moreover, if we consider $G (\lambda)$ as an operator mapping $L^2_{\rm comp}(\mathbb R^{2})$ onto $L^2_{\rm loc}(\mathbb R^{2})$ in the sense that for any $\rho\in C_0^\infty(\mathbb R^{2})$,  the operator $\rho G (\lambda) \rho: L^2(\mathbb R^{2})\to L^2(\mathbb R^{2})$ is bounded, then the operator $G (\lambda)$ can be extended into an analytic family of operators for all $\lambda \in \mathbb{C}$. Specifically, we have the following result concerning a resonance-free region and an estimate for the resolvent $G (\lambda)$ in $\mathbb R^{2}$.

\begin{theorem}\label{free_estimate_2d}
The resolvent defined in \eqref{rg} continues analytically to an analytic family of operators for $\lambda\in\mathbb C^+\setminus\{0\}$
\begin{align*}
G (\lambda): L^2_{\rm comp}(\mathbb R^{2})\to L^2_{\rm loc}(\mathbb R^{2}),
\end{align*}
such that for each $\rho\in C_0^\infty(\mathbb R^{2})$ 
\begin{align}\label{free}
\|\rho G(\lambda) \rho\|_{L^2(\mathbb R^2)\rightarrow H^j(\mathbb R^2)}\lesssim |\lambda|^{-\frac{1}{2}} (1+|\lambda|^2)^{\frac{j}{2}} e^{L (\Im\lambda)_-}, \quad j=0, 1, 2,
\end{align}
where $t_{-}:=\max\{-t,0\}$ and $L>{\rm diam}({\rm supp}\rho): = \sup\{|x - y| : x, y \in {\rm supp}\rho\}$.
\end{theorem}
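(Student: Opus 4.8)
The plan is to work entirely from the explicit kernel rather than re-deriving it. Since $\rho\in C_0^\infty(\mathbb R^2)$, the product kernel $\rho(x)G(\lambda,x,y)\rho(y)$ is supported in the compact set $\mathrm{supp}\,\rho\times\mathrm{supp}\,\rho$, on which the integral representation for $G(\lambda,x,y)$ defines, for fixed $x\neq y$, a holomorphic function of $\lambda$; together with the locally uniform operator bounds obtained below and a Morera/weak-analyticity argument, this shows that $\rho G(\lambda)\rho$ extends from $\mathbb C^+$ to an analytic operator family on $\mathbb C\setminus\{0\}$. For the estimate with $j=0$ I would simply insert the pointwise bound already recorded, $|G(\lambda,x,y)|\lesssim|\lambda|^{-\frac12}|x-y|^{-\frac12}e^{-\Im\lambda|x-y|}$. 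On $\mathrm{supp}\,\rho$ one has $|x-y|\le\mathrm{diam}(\mathrm{supp}\,\rho)<L$, so $e^{-\Im\lambda|x-y|}\le e^{L(\Im\lambda)_-}$ for every $\lambda$ (both signs of $\Im\lambda$). Hence the kernel of $\rho G(\lambda)\rho$ is dominated by $|\lambda|^{-\frac12}e^{L(\Im\lambda)_-}|\rho(x)||\rho(y)||x-y|^{-\frac12}$, and since $|x-y|^{-\frac12}$ is locally integrable in $\mathbb R^2$, the integrals $\int_{\mathrm{supp}\,\rho}|x-y|^{-\frac12}\,\mathrm dy$ are bounded uniformly in $x$. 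The Schur test then gives \eqref{free} for $j=0$.

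For $j=1$ I would avoid differentiating the kernel and instead use the equation. Writing $u=G(\lambda)(\rho f)$, one has $-\Delta u-\lambda^2 u=\rho f$ with $u\in H^2_{\mathrm{loc}}(\mathbb R^2)$ by elliptic regularity. Choose $\chi\in C_0^\infty(\mathbb R^2)$ with $\chi\equiv1$ on $\mathrm{supp}\,\rho$ and $\mathrm{diam}(\mathrm{supp}\,\chi)<L$, which is possible because $\mathrm{diam}(\mathrm{supp}\,\rho)<L$ strictly. Pairing the equation with $\overline{\chi^2u}$ and integrating by parts yields
\begin{align*}
\int\chi^2|\nabla u|^2=\lambda^2\int\chi^2|u|^2+\int\chi^2(\rho f)\bar u-2\int\chi(\nabla\chi\cdot\nabla u)\bar u.
\end{align*}
Absorbing the final term by Young's inequality and estimating $\|\chi u\|_{L^2}$ and $\|(\nabla\chi)u\|_{L^2}$ by the $j=0$ bound applied to $\chi G(\lambda)\rho$ and $(\nabla\chi)G(\lambda)\rho$—whose kernels are again supported where $|x-y|\le\mathrm{diam}(\mathrm{supp}\,\chi)<L$—gives $\|\chi\nabla u\|_{L^2}\lesssim|\lambda|^{-\frac12}(1+|\lambda|^2)^{\frac12}e^{L(\Im\lambda)_-}\|f\|_{L^2}$. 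Since $\nabla(\rho u)=(\nabla\rho)u+\rho\nabla u$ is controlled by $\|\chi u\|_{L^2}$ and $\|\chi\nabla u\|_{L^2}$, this is exactly the $j=1$ estimate.

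For $j=2$ I would use that $\|\rho u\|_{H^2}$ is comparable to $\|\rho u\|_{L^2}+\|\Delta(\rho u)\|_{L^2}$ (immediate on the Fourier side) and expand $\Delta(\rho u)=\rho\Delta u+2\nabla\rho\cdot\nabla u+(\Delta\rho)u$. Substituting $\rho\Delta u=-\lambda^2\rho u-\rho^2 f$ from the equation, the terms $\|(\Delta\rho)u\|_{L^2}$ and $\|\nabla\rho\cdot\nabla u\|_{L^2}$ are handled by the $j=0$ and $j=1$ bounds, while the dominant contribution $|\lambda|^2\|\rho u\|_{L^2}\lesssim|\lambda|^{\frac32}e^{L(\Im\lambda)_-}\|f\|_{L^2}$ together with the small-$|\lambda|$ terms assembles into the factor $|\lambda|^{-\frac12}(1+|\lambda|^2)$, giving \eqref{free} for $j=2$.

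The routine heart is the Schur test, and the higher-order bounds are a bootstrap on the equation. I expect the genuinely delicate points to be, first, justifying the analytic continuation across the real axis into the region where $e^{L(\Im\lambda)_-}$ is active: the two-dimensional free resolvent carries a logarithmic branch point at $\lambda=0$, so one must verify that the given integral representation stays single-valued and locally bounded on the claimed domain and that operator analyticity (not merely analyticity of the kernel in $\lambda$) follows. Second, one must track the powers of $|\lambda|$ carefully so that the separate regimes $|\lambda|\to0$ and $|\lambda|\to\infty$ combine into the single prefactor $(1+|\lambda|^2)^{j/2}$ with a constant uniform in $\lambda$; this is bookkeeping, but it is where the sign conventions and the absorption step in the energy identity must be carried out precisely.
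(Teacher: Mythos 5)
Your proof is correct, and while it keeps the paper's overall skeleton (kernel bound for $L^2\to L^2$, the equation $-\Delta u=\lambda^2u+\rho f$ to reach $H^2$, weak analyticity of $\langle\rho G(\lambda)\rho f,g\rangle$ upgraded to operator analyticity), two of your three estimates are obtained by genuinely different means. For $j=0$ the paper computes the Hilbert--Schmidt norm $\bigl(\int\int\rho^2(x)|x-y|^{-1}\rho^2(y)\,{\rm d}x\,{\rm d}y\bigr)^{1/2}$, which yields boundedness \emph{and} compactness in one stroke; compactness is not part of the statement here, but it is precisely what the paper reuses later in the analytic Fredholm argument of Theorem \ref{meromorphic}, so your Schur test, though it gives the same bound with the same prefactor, would need to be supplemented at that later stage. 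For $j=1$ the paper simply interpolates between $j=0$ and $j=2$, whereas you prove it directly from the energy identity obtained by pairing with $\overline{\chi^2u}$; your bookkeeping closes correctly, since $|\lambda|^2\|\chi u\|^2\lesssim|\lambda|\,e^{2L(\Im\lambda)_-}\|f\|^2$ and the remaining terms of size $|\lambda|^{-1/2}$, $|\lambda|^{-1}$ are dominated because $|\lambda|^{-1/2}\le\max\{|\lambda|^{-1},|\lambda|\}$, so the total is $\lesssim|\lambda|^{-1}(1+|\lambda|^2)e^{2L(\Im\lambda)_-}\|f\|^2$ as required; this route is more self-contained, at the cost of needing the auxiliary cutoff $\chi$ with ${\rm diam}({\rm supp}\,\chi)<L$, which your strict inequality on ${\rm diam}({\rm supp}\,\rho)$ legitimately provides. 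Your $j=2$ step (Fourier characterization of $\|\cdot\|_{H^2}$ plus the product rule, substituting $\rho\Delta u=-\lambda^2\rho u-\rho^2f$) is essentially the paper's interior elliptic estimate written out by hand, and it has the incidental merit of not producing the extraneous term $e^{L(\Re\lambda)_-}$ that appears in the paper's $j=2$ computation and is inconsistent with the stated bound \eqref{free}. Finally, both you and the paper are equally brisk about the one genuinely delicate point you flag: the two-dimensional kernel has a logarithmic branch point at $\lambda=0$, and single-valued analyticity on $\mathbb C\setminus\{0\}$ (the statement even writes $\mathbb C^+\setminus\{0\}$, evidently a typo) rests on fixing the branch of the square root in the integral representation; the paper's assertion that the analyticity of $I_0(\lambda)$ is easy to verify is no more detailed than your Morera sketch, so you are not below the paper's own standard of rigor there.
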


\begin{proof}
Let $\rho\in C_0^\infty(\mathbb R^{2})$ with $\rm supp\rho\subset\Omega$. For each $\lambda\neq 0$ we define the operator $\rho G (\lambda)\rho$ by
\begin{align}\label{kernel2}
(\rho G (\lambda)\rho f)(x) = \int_{\mathbb R^{2}}\rho(x)G (\lambda, x, y) \rho(y)f(y){\rm d}y.
\end{align}
It can be verified that $\rho G (\lambda)\rho$ is bounded on $L^2(\mathbb R^{2})$ and satisfies the estimate
\begin{align*}
\|\rho G (\lambda)\rho\|_{L^2(\mathbb R^{2})\rightarrow L^2(\mathbb R^{2})} &\lesssim |\lambda|^{-\frac{1}{2}} e^{L(\Im\lambda)_-} \times\\
&\qquad \Big(\int_{\mathbb R^{2}}\int_{\mathbb R^{2}} \rho^2(x)\frac{1}{|x-y|}\rho^2(y){\rm d}x{\rm d}y\Big)^{1/2}\\
&\lesssim |\lambda|^{-\frac{1}{2}} e^{L(\Im\lambda)_-},
\end{align*}
which implies that the operator $\rho G (\lambda) \rho$ belongs to the Hilbert--Schmidt class. Moreover, for any $f, g\in L^2(\mathbb R^{2})$, by the explicit expression \eqref{kernel2} of $\rho G (\lambda)\rho$, it is easy to prove that the function $I_0(\lambda):=\langle \rho G(\lambda)\rho f, g\rangle_{L^2(\mathbb R^{2})}$ is analytic in $\mathbb{C}\setminus\{0\}$. Consequently, $\rho G (\lambda)\rho$ is an analytic family of compact operators for $\lambda\in\mathbb{C}\setminus\{0\}$.

It suffices to show the case $j=2$ in order to prove \eqref{free}. By the standard elliptic estimate, one has for any $\Omega\subset\subset W \subset \mathbb{R}^{2}$ that 
\begin{align*}
\|u\|_{H^2(\Omega)}\leq C\big(\|u\|_{L^2(W)} + \|\Delta u\|_{L^2(W)}\big).
\end{align*}
Taking $\tilde{\rho}\in C^\infty_0(\Omega)$ such that $\tilde{\rho} = 1$ near the support of $\rho$, we obtain 
\[
\|\rho u\|_{H^2(\Omega)}\leq C\big(\|\tilde{\rho}u\|_{L^2(\mathbb{R}^{2})} + \|\tilde{\rho}\Delta u\|_{L^2(\mathbb{R}^{2})}\big).
\]
Thus letting $u = G (\lambda)(\rho f), f\in L^2(\mathbb R^n)$ gives 
\[
\|\rho G (\lambda)(\rho f)\|_{H^2(\Omega)}\leq C\big(\|\tilde{\rho}G (\lambda)(\rho f)\|_{L^2(\mathbb R^{2})} + \|\tilde{\rho}\Delta(G (\lambda)(\rho f))\|_{L^2(\mathbb R^{2})}\big).
\]
Noting 
\begin{align*}
\|\tilde{\rho}\Delta(G (\lambda)(\rho f))\|_ {L^2(\mathbb R^2)}&=\| \rho f + \tilde{\rho}\lambda^2G (\lambda)(\rho f)\|_{L^2(\mathbb R^2)}\\& \lesssim 
|\lambda|^{-\frac{1}{2}}(1+
|\lambda|^2) \big(e^{L(\Im\lambda)_-} + e^{L(\Re\lambda)_-}\big)\|f\|_{L^2(\Omega)},
\end{align*}
we obtain 
\[
\|\rho G (\lambda)\rho\|_{L^2(\Omega)\rightarrow H^2(\Omega)}\lesssim 
|\lambda|^{-\frac{1}{2}}(1+ |\lambda|^2)
\big(e^{L(\Im\lambda)_-} + e^{L(\Re\lambda)_-}\big).
\]
Finally, the cases for $j = 1$ follow by the interpolation between $j=0$ and $j=2$.
\end{proof}

\subsection{The resolvent for the waveguide in $\mathbb R^3$}

Denote the resolvent in the waveguide by $R_V(\lambda) = (-\Delta + V - \lambda^2)^{-1}$, which reduces to $R_0(\lambda) = (-\Delta - \lambda^2)^{-1}$  when $V = 0$. For $\lambda>0$, taking the Fourier decomposition with respect to the variable $x_3\in [0, L]$, we have
\begin{align*}
u(x^\prime, x_3) = \sum_{n = 1}^\infty u_n(x^\prime) \sin(\alpha_n x_3),\quad 
f(x^\prime, x_3) = \sum_{n = 1}^\infty f_n(x^\prime) \sin(\alpha_n x_3),
\end{align*}
and
\begin{align*}
\big(-\Delta_{x^\prime} - (\lambda^2 - \alpha_n^2)\big)u_n(x^\prime) = f_n(x^\prime), \quad x^\prime \in \mathbb R^2, \, n \in\mathbb N,
\end{align*}
where $\alpha_n=n\pi/L$, the Fourier coefficients $u_n$ of $u$ and $f_n$ of $f$ are given by
\begin{align}\label{fourier}
u_n(x^\prime) = \frac{2}{L} \int_0^L u(x) \sin(\alpha_n x_3) {\rm d}x_3,\quad 
f_n(x^\prime) = \frac{2}{L} \int_0^L f(x) \sin(\alpha_n x_3) {\rm d}x_3.
\end{align}
For $f\in L^2(D)$ we have the Parseval identity
\begin{align}\label{pif}
\|f\|^2_{L^2(D)} = \frac{L}{2} \sum_{n = 1}^\infty \|f_n\|^2_{L^2(\mathbb R^2)}.
\end{align}

It follows from the Sommerfeld radiation condition \eqref{src} that we have the following explicit representation for 
$u_n(x^\prime)$: 
\[
u_n(x^\prime) = (-\Delta_{x^\prime} - \beta^2_n)^{-1} f_n = G(\beta_n(\lambda))(f_n),
\]  
where $G$ is the free resolvent in $\mathbb R^2$ introduced in Section \ref{2} and $\beta^2_n(\lambda) = \lambda^2 - \alpha_n^2$ with $\Im\beta_n>0$. In the case when $n\in\mathbb N$ is such that $\lambda = \alpha_n$, 
the resonance phenomenon may occur at those special frequencies which may cause nonuniqueness of the solution for the waveguide problem in $\mathbb R^3$. Therefore,  we assume that $\lambda\neq \alpha_n$ for all $n\in\mathbb N$ to exclude the resonance. For a more detailed discussion on this issue, we refer to \cite{AGA} and (II. iii) and assumption (A. I) in Appendix A of \cite{KLU}. Thus, we have a formal representation for the free resolvent 
\begin{align}\label{sum1}
R_0(\lambda)(f) = \sum_{n = 1}^\infty  (-\Delta_{x^\prime} - \beta^2_n(\lambda))^{-1} (f_n) \sin(\alpha_n x_3), \quad \lambda\in{\mathbb R}^+\backslash\cup_{n=0}^N\{\alpha_n\}. 
\end{align}

Next, choosing $\Im\beta_n(\lambda)>0$ for $\lambda\in\mathbb C^+$, we extend the domain of $R_0(\lambda)$ from the positive real axis to the upper complex half-plane $\mathbb C^+$. Denote
\[
\beta_j^2(\lambda) = \gamma_n (\lambda) + {\rm i} \eta_n (\lambda), \quad \lambda\in\mathbb C^+,
\]
where
\[
\gamma_n (\lambda) = \lambda_1^2 - \lambda_2^2 - \alpha_n^2, \quad \eta_n (\lambda) = 2\lambda_1\lambda_2.
\]
A simple calculation yields
\[
\beta_n (\lambda) = a_n (\lambda) + {\rm i}b_n (\lambda),
\]
where
\begin{align}\label{ribeta}
a_n (\lambda) = {\Re}\beta_n = \bigg(\frac{\sqrt{\gamma_n^2 + \eta_n^2} + \gamma_n}{2}\bigg)^{1/2}, \quad 
b_n (\lambda) = {\Im}\beta_n = \bigg(\frac{\sqrt{\gamma_n^2 + \eta_n^2} - \gamma_n}{2}\bigg)^{1/2}.
\end{align}
Hence, for each $\lambda\in\mathbb C^+,$ it is easy to see that ${\rm Im}\beta_n \sim \alpha_n$ uniformly for all $z \in B_\delta(\lambda)$, where 
$B_\delta(\lambda) := \{z: |z - \lambda|<\delta\}\subset \mathbb C^+$ is a neighborhood of $\lambda$. Then we have 
\[
e^{{\rm i}\beta_n(z)|x^\prime - y^\prime|} \sim e^{-\alpha_n|x^\prime - y^\prime|},
\]
which implies that the series in \eqref{sum1} converges absolutely. Hence $R_0(\lambda)$ is analytic for all $\lambda$ in the upper complex half-plane $\mathbb C^+$ and is continuous up to the boundary $\mathbb R\backslash\cup_{n=0}^N\{\alpha_n\}$.

In what follows, we study the continuation of the resolvent $R_0(\lambda)$ to the lower complex half-plane 
$\mathbb C^- := \{\lambda=\lambda_1+{\rm i}\lambda_2\in\mathbb C: \lambda_2<0\}$. It is required to have an extension of the domain for $a_n$ and $b_n$ in \eqref{ribeta} from $\mathbb C^+$ to $\mathbb C^-$. Let $a_n = a_n(\lambda_1, \lambda_2)$ and $b_n=b_n(\lambda_1, \lambda_2)$ be functions of the variables $\lambda_1$ and $\lambda_2$. Noting that $\partial_{\lambda_2} a_n(\lambda) = 0$ and $b_n(\lambda) = 0$ on the interface $\{\lambda\in\mathbb C: \lambda_2 = 0\}$, we apply the even extension to $a_n(\lambda_1, \lambda_2)$ and the odd extension to $b_n(\lambda_1, \lambda_2)$ with respect to the variable $\lambda_2$, which gives the extension of $\beta_n(\lambda)$ to the whole complex plane as 
follows 
\begin{align}\label{real}
\Re\beta_n = 
\begin{cases}
a_n(\lambda_1, \lambda_2), \quad &\lambda_2\geq 0,\\[7pt]
a_n(\lambda_1, -\lambda_2), \quad &\lambda_2< 0,
\end{cases}
\end{align}
and
\begin{align}\label{imaginary}
\Im\beta_n = 
\begin{cases}
b_n(\lambda_1, \lambda_2), \quad &\lambda_2\geq 0,\\[7pt]
-b_n(\lambda_1, -\lambda_2), \quad &\lambda_2< 0.
\end{cases}
\end{align}

It is clear to note that $\beta_n(\lambda) = \Re\beta_n(\lambda) + {\rm i} \Im\beta_n(\lambda)$ is analytic in the whole complex plane $\mathbb C$. However, the imaginary part $\Im\beta_n(\lambda)$ is negative in $\mathbb C^-$ now. Therefore, it holds that 
\[
e^{{\rm i}\beta_n(z)|x^\prime - y^\prime|} \sim e^{\alpha_n|x^\prime - y^\prime|},
\]
which means that the series in \eqref{sum1} may not converge. To resolve this issue and study the meromorphic continuation of the resolvent $R_V(\lambda)$, we make the following two assumptions:

\begin{itemize}

\item[(A)] The source function $f(x)$ has only a finite number of Fourier coefficients defined in \eqref{fourier};

\item[(B)] The potential function $V(x)$ only depends on the $x^\prime$ variable.

\end{itemize}
By Assumption (A), there exits a positive integer $N>0$ such that the source function $f$ can be written as 
\begin{equation*}
f(x) = \sum_{n = 1}^N f_n(x^\prime)\sin(\alpha_n x_3). 
\end{equation*}

Denote
\[
L_N^2(D) = \{f\in L^2(D): f\,\, \text{satisfies Assumption (A) with $N$ Fourier coefficients}\},
\] 
which is a closed subspace of $L^2(D)$ equipped with the usual $L^2$-norm, and
\[
H_N^j(D) = \{u \in H^j(D): u\,\, \text{satisfies Assumption (A) with $N$ Fourier coefficients}\},
\] 
which is a closed subspace of $H^j(D)$ equipped with the usual $H^j$-norm for each $j\in\mathbb N$. 
Denote by $L_{\rm comp}^2(D)$ the space of functions in $L^2(D)$ with compact supports and $C_0^\infty(D)$ the space of smooth functions with compact supports contained in $\overline{D}$. Denote by $C_{0, \,x^\prime}^\infty(D)$ the subspace of  $C_0^\infty(D)$ consisting of smooth functions which have compact supports and depend only on the $x^\prime$ variable, i.e., 
\[
C_{0, \,x^\prime}^\infty(D) = \{\rho(x): \rho(x) = \eta(x^\prime), \,\eta(x^\prime)\in C_0^\infty(\mathbb R^2)\}.
\]

\begin{theorem}\label{free_estimate}
Under Assumptions (A) and (B), the resolvent operator $R_0(\lambda)$ for $\lambda\in\mathbb C^+$ can be extended into an analytic family of operators for all $\lambda\in\mathbb{C}\backslash\cup_{n=0}^N\{\alpha_n\}$ as
\begin{align*}
R_0(\lambda): L^2_{N, \rm comp}(\Omega)\to L^2_{N, \rm loc}(\Omega),
\end{align*}
such that for each $\rho\in C_{0, \,x^\prime}^\infty(\Omega)$ with ${\rm supp}(\rho)\subset \overline{\Omega}$ 
\begin{align}\label{free2}
\|\rho R_0(\lambda) \rho\|_{L_N^2(\Omega)\rightarrow H_N^j(\Omega)}\lesssim |\lambda|^{-1/2} (1 + |\lambda|^2)^{j/2}\max_{1\leq n\leq N} e^{T(\Im\beta_n)_-}\quad j=0, 1, 2,
\end{align}
where $T > \sup \{|x - y|: x, y\in{\rm supp}\rho\}$. 
\end{theorem}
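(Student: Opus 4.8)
The plan is to use Assumption (A) to collapse the series defining $R_0(\lambda)$ into a finite sum, and then to apply Theorem~\ref{free_estimate_2d} one Fourier mode at a time. Since $f\in L^2_N(\Omega)$, we have $f=\sum_{n=1}^N f_n(x')\sin(\alpha_n x_3)$, so the formal representation \eqref{sum1} truncates to
\[
R_0(\lambda)f=\sum_{n=1}^N G(\beta_n(\lambda))(f_n)\sin(\alpha_n x_3).
\]
The decisive structural point, furnished by the hypothesis $\rho\in C^\infty_{0,\,x'}(\Omega)$, is that $\rho(x)=\eta(x')$ is independent of $x_3$ and hence commutes with the sine expansion: the $n$-th Fourier coefficient of $\rho f$ is simply $\eta f_n$, so that
\[
\rho R_0(\lambda)\rho f=\sum_{n=1}^N\big[\eta\,G(\beta_n(\lambda))\,\eta\big](f_n)\sin(\alpha_n x_3).
\]
Thus $\rho R_0(\lambda)\rho$ is block-diagonal across the first $N$ modes, the $n$-th block being precisely the localized two-dimensional free resolvent $\eta\,G(\beta_n(\lambda))\,\eta$ with spectral parameter $\beta_n(\lambda)$; this is exactly where the finite-band assumption is indispensable, as it removes any need to control the tail $n\to\infty$ (note that only Assumption (A) is used for the free operator).

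First I would record the analyticity. By the even/odd extension \eqref{real}--\eqref{imaginary}, each $\beta_n(\lambda)$ is analytic on $\mathbb C\setminus\{\pm\alpha_n\}$, and Theorem~\ref{free_estimate_2d} gives that $\mu\mapsto\eta\,G(\mu)\,\eta$ is analytic for $\mu\neq 0$. Since $\beta_n(\lambda)=0$ exactly when $\lambda=\alpha_n$, the composition $\lambda\mapsto\eta\,G(\beta_n(\lambda))\,\eta$ is analytic on $\mathbb C\setminus\cup_{n=0}^N\{\alpha_n\}$; the pairing $\langle\rho R_0(\lambda)\rho f,g\rangle$ being a finite sum of such functions then yields the claimed analytic continuation.

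Next I would prove \eqref{free2}, handling $j=0$ and $j=2$ and interpolating for $j=1$, in parallel with the proof of Theorem~\ref{free_estimate_2d}. Applying that theorem to each block with $\rho\to\eta$, $\lambda\to\beta_n(\lambda)$, and $L\to T$ gives
\[
\|\eta\,G(\beta_n(\lambda))\,\eta\|_{L^2(\mathbb R^2)\to H^j(\mathbb R^2)}\lesssim|\beta_n(\lambda)|^{-1/2}\big(1+|\beta_n(\lambda)|^2\big)^{j/2}e^{T(\Im\beta_n)_-}.
\]
Because the blocks act on orthogonal modes, the Parseval identity \eqref{pif}, combined with the fact that passing between $H^j(\Omega)$ and $H^j(\mathbb R^2)$ costs only the bounded factors $\alpha_n^k\le\alpha_N^k$ coming from the $x_3$-derivatives, reduces $\|\rho R_0(\lambda)\rho f\|_{H^j(\Omega)}^2$ to the finite sum $\sum_{n=1}^N\|\eta\,G(\beta_n)\,\eta f_n\|_{H^j(\mathbb R^2)}^2$. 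To turn the per-mode polynomial weights into the stated $|\lambda|^{-1/2}(1+|\lambda|^2)^{j/2}$, I would use the identity $|\beta_n(\lambda)|^2=|\lambda^2-\alpha_n^2|$, which immediately gives $1+|\beta_n|^2\le(1+\alpha_N^2)(1+|\lambda|^2)$ and $|\beta_n|\gtrsim|\lambda|$ for large $|\lambda|$; the finitely many modes then contribute at most a factor $N$ and the maximum of the exponentials, both absorbed into the generic constant.

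The main obstacle is the behavior of $|\beta_n(\lambda)|^{-1/2}$ near the thresholds $\lambda=\alpha_n$, where $\beta_n\to0$ and the two-dimensional bound degenerates although $|\lambda|^{-1/2}$ stays finite; this is precisely why the $\alpha_n$ must be excised, and the polynomial comparison $|\beta_n|\gtrsim|\lambda|$ should be argued in the regime relevant to the application, namely for $|\lambda|$ bounded away from the finite threshold set. The conceptually more serious point is the factor $e^{T(\Im\beta_n)_-}$: by \eqref{imaginary} one has $\Im\beta_n<0$ in $\mathbb C^-$, so these exponentials grow as $\lambda$ descends into the lower half-plane and an infinite sum would diverge; the finite-band reduction confines the growth to $\max_{1\le n\le N}e^{T(\Im\beta_n)_-}$, which is finite for every admissible $\lambda$ and is exactly the quantity on the right-hand side of \eqref{free2}.
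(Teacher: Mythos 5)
Your proposal is correct and follows essentially the same route as the paper's own proof: truncate the representation \eqref{sum1} to the first $N$ modes via Assumption (A), use that $\rho$ depends only on $x'$ to reduce $\rho R_0(\lambda)\rho$ to the blocks $\eta\,G(\beta_n(\lambda))\,\eta$, apply Theorem \ref{free_estimate_2d} mode by mode, and combine with the Parseval identity \eqref{pif} and analyticity of each composition $\lambda\mapsto\eta\,G(\beta_n(\lambda))\,\eta$. If anything, you are more explicit than the paper on one point it silently elides: the paper's per-mode estimate \eqref{est_beta} is stated directly in terms of $|\lambda|$, whereas the two-dimensional theorem naturally yields bounds in $|\beta_n(\lambda)|$, and your conversion via $|\beta_n(\lambda)|^2=|\lambda^2-\alpha_n^2|$ together with the caveat that the comparison $|\beta_n|\gtrsim|\lambda|$ fails near the thresholds $\lambda=\alpha_n$ is a legitimate refinement of the argument.
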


\begin{proof}

We first extend $\beta_n(\lambda)$ in \eqref{sum1} to the whole complex plane by \eqref{real} and \eqref{imaginary}, then $\beta_n(\lambda)$ is an analytic function of $\lambda$. It follows from Theorem \ref{free_estimate_2d} that for any $\eta(x^\prime)\in C_0^\infty(\mathbb R^2)$, the resolvent $\eta G (\beta_n(\lambda)) \eta : L^2(\mathbb R^2)\rightarrow H^j(\mathbb R^2)$ is analytic in $\mathbb C\backslash\{0\}$ excluding the resonance $\lambda = \alpha_n$
and satisfies the estimates
\begin{align}\label{est_beta}
\|\eta G (\beta_n(\lambda)) \eta\|_{L^2(\mathbb R^2)\rightarrow H^j(\mathbb R^2)} \lesssim |\lambda|^{-1/2}(1 + |\lambda|^2)^{j/2} e^{L(\Im\beta_n(\lambda))_-}, \quad j = 0, 1, 2,
\end{align} 
where $L > \sup \{|x - y|: x, y\in{\rm supp}\eta\}$.

Let $\rho\in C_{0, \, x^\prime}^\infty(\Omega)$. Then if $f(x)\in L_{N}^2(D)$, $(\rho f)(x)$ is also 
in $L^2_N(D)$. Define the operator $\rho R_0(\lambda)\rho$ by
\begin{align*}
(\rho R_0(\lambda) \rho f )(x) = \sum_{n = 1}^N \rho(x) G (\beta_n(\lambda))(\rho f)_n(x^\prime) \sin(\alpha_n x_3) .
\end{align*}
Therefore, for $\lambda\in\mathbb{C}\backslash\cup_{n=0}^N\{\alpha_n\}$, we have from \eqref{est_beta} and the Parseval identity \eqref{pif} that 
\begin{align*}
\|\rho R_{0}(\lambda) \rho f \|_{H_N^j(D)}&\lesssim \Sigma_{n =1}^N |\lambda|^{-1/2}(1 + |\lambda|^2)^{j/2} e^{T(\Im\beta_n(\lambda))_-} \|f_n\|_{L^2(\mathbb R^2)}\\
&\lesssim  |\lambda|^{-1/2}(1 + |\lambda|^2)^{j/2} \max_{1\leq n\leq N} e^{T(\Im\beta_n(\lambda))_-}  \Sigma_{n =1}^N \|f_n\|_{L^2(\mathbb R^2)}\\
&\lesssim  |\lambda|^{-1/2}(1 + |\lambda|^2)^{j/2} \max_{1\leq n\leq N} e^{T(\Im\beta_n(\lambda))_-} \|f\|_{L_N^2(D)},  \quad j = 0, 1, 2,
\end{align*}
where $T > \sup \{|x - y|: x, y\in{\rm supp}\rho\}$. Consequently, we obtain that 
\[
\rho R_{0}(\lambda) \rho: L_N^2(D)\rightarrow H_N^j(D), \quad j = 0, 1, 2
\]
is an analytic family of operators for $\lambda\in\mathbb{C}\backslash\cup_{n=0}^N\{\alpha_n\}$ and 
\begin{align*}
\|\rho R_{0}(\lambda) \rho\|_{L_N^2(D)\rightarrow H_N^j(D)} \lesssim |\lambda|^{-1/2}(1 + |\lambda|^2)^{j/2} \max_{1\leq n\leq N} e^{T(\Im\beta_n)_-},
\quad j = 0, 1, 2,
\end{align*}
which completes the proof. 
\end{proof}

By Theorem \ref{free_estimate}, the scattering problem \eqref{main_eq}--\eqref{src} has a unique solution for all the positive wavenumbers excluding the resonances when $V(x)\equiv 0$, which is stated in the following result.

\begin{corollary}\label{wp1}
Let $V(x)\equiv 0$. For all $\kappa\in\mathbb{R}^+\backslash\cup_{n=1}^N\{\alpha_n\}$, the scattering problem \eqref{main_eq}--\eqref{src} admits a unique solution $u\in H^2(\Omega)$ such that 
\[
\|u\|_{H^2(\Omega)}\lesssim \|f\|_{L^2(\Omega)}.
\]
\end{corollary}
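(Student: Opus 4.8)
The plan is to exhibit the solution explicitly as $u = R_0(\kappa)f$, to read off existence and the $H^2$-bound directly from Theorem \ref{free_estimate}, and then to prove uniqueness by reducing to the two-dimensional Helmholtz equation mode by mode.

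For existence, under Assumption (A) I would set $u := R_0(\kappa)f = \sum_{n=1}^N G(\beta_n(\kappa))(f_n)\sin(\alpha_n x_3)$. Each coefficient solves $(-\Delta_{x^\prime}-\beta_n^2)u_n = f_n$ in $\mathbb{R}^2$ with the radiation behaviour encoded in the free resolvent $G$, so recombining against $\sin(\alpha_n x_3)$ reproduces \eqref{main_eq} with $V\equiv 0$, the Dirichlet condition on $\Gamma_1\cup\Gamma_2$, and \eqref{src}. This yields a solution; it remains to quantify it and to show it is the only one.

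For the bound I would pick $\rho\in C_{0,\,x^\prime}^\infty(\Omega)$ equal to $1$ on the $x^\prime$-projection of $\overline{\Omega}$, so that $\rho f = f$ and $\rho u = u$ on $\Omega$. Theorem \ref{free_estimate} with $j=2$ and $\lambda=\kappa$ then yields
\[
\|u\|_{H^2(\Omega)} = \|\rho R_0(\kappa)\rho f\|_{H_N^2(\Omega)} \lesssim |\kappa|^{-1/2}(1+|\kappa|^2)\max_{1\le n\le N} e^{T(\Im\beta_n(\kappa))_-}\|f\|_{L^2(\Omega)}.
\]
The decisive simplification is that for real $\kappa\notin\{\alpha_n\}$ one has $\beta_n^2(\kappa)=\kappa^2-\alpha_n^2\in\mathbb{R}$, so $\beta_n$ is either a positive real (the propagating case $\kappa>\alpha_n$) or purely imaginary with positive imaginary part (the evanescent case $\kappa<\alpha_n$); in both regimes $\Im\beta_n\ge 0$, hence $(\Im\beta_n)_-=0$ and every exponential factor equals $1$. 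What survives is the finite $\kappa$-dependent constant $|\kappa|^{-1/2}(1+|\kappa|^2)$, which stays bounded away from the resonances and gives the asserted estimate.

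Uniqueness is where I expect the real work. Taking $f=0$ and a solution $w$, I would expand $w=\sum_{n=1}^N w_n(x^\prime)\sin(\alpha_n x_3)$; orthogonality of $\{\sin(\alpha_n x_3)\}$ on $[0,L]$ together with the boundary conditions reduces everything to showing each $w_n\equiv 0$, where $(-\Delta_{x^\prime}-\beta_n^2)w_n=0$ in $\mathbb{R}^2$ with the radiation behaviour inherited from \eqref{src}. For a propagating mode ($\beta_n>0$ real) this is the classical Sommerfeld uniqueness for the two-dimensional Helmholtz equation: Rellich's lemma forces $w_n$ to vanish outside a large disk, and unique continuation propagates this to all of $\mathbb{R}^2$. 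For an evanescent mode ($\beta_n={\rm i}|\beta_n|$) the equation becomes the modified Helmholtz equation $(-\Delta_{x^\prime}+|\beta_n|^2)w_n=0$; the radiation condition selects the decaying branch, so $w_n\to 0$ at infinity, and multiplying by $\overline{w_n}$ and integrating by parts gives $\int_{\mathbb{R}^2}(|\nabla w_n|^2+|\beta_n|^2|w_n|^2)\,{\rm d}x^\prime=0$, whence $w_n\equiv 0$. The main obstacle is to treat these two regimes uniformly and to verify that the conditions inherited by the $w_n$ from \eqref{src} are precisely those required by the two-dimensional uniqueness statements; granting this, $w=0$ and the proof is complete.
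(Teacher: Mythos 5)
Your proposal is correct and takes essentially the same route as the paper: the paper presents this corollary as an immediate consequence of Theorem \ref{free_estimate}, i.e.\ existence via $u=R_0(\kappa)f=\sum_{n=1}^N G(\beta_n(\kappa))(f_n)\sin(\alpha_n x_3)$ and the $H^2$ bound via \eqref{free2} with $j=2$, where for real $\kappa\notin\cup_{n}\{\alpha_n\}$ one has $\Im\beta_n\geq 0$ so the exponential factors equal one. Your mode-by-mode uniqueness argument (Rellich plus unique continuation for propagating modes, the energy identity for evanescent modes) merely makes explicit the step the paper leaves implicit when it asserts that the radiation condition forces $u_n=G(\beta_n(\kappa))(f_n)$.
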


Denote by $T: L^2_{N, \rm comp}(D)\rightarrow L^2_{N, \rm loc}(D)$ an operator such that for any $\rho\in C_{0, \, x^\prime}^\infty(D)$ the operator $\rho T \rho: L_N^2(D)\rightarrow L_N^2(D)$ is bounded. Below is the analytic Fredholm theory. The result is classical and the proof may be found in many references, e.g., \cite[Theorem 8.26]{CK}.

\begin{theorem}
Let $D$ be a domain in $\mathbb C$ and let $K: D \rightarrow \mathcal{L}(X)$ be an operator
valued analytic function such that $K(z)$ is compact for each $z\in D$. Then either
\begin{itemize}

\item[(a)] $(I - K(z))^{-1}$ does not exist for any $z\in D$ or

\item[(b)] $(I - K(z))^{-1}$ exists for all $z\in D\backslash S$ where $S$ is a discrete subset of $D$.

\end{itemize}
Here $X$ is a Banach space and $\mathcal{L}(X)$ denotes the Banach space
of bounded linear operators mapping the Banach space $X$ into itself.
\end{theorem}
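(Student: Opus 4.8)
The plan is to prove the dichotomy by a local-to-global argument: near each point I reduce the invertibility of $I - K(z)$ to the nonvanishing of a scalar analytic function, and then I use the connectedness of $D$ together with the identity theorem to propagate the local alternative to a global one. The compactness of $K(z)$ enters only through a finite-rank approximation that collapses an infinite-dimensional invertibility question into a finite-dimensional determinant condition.

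First I would carry out the local reduction. Fix $z_0 \in D$. Since $K(z_0)$ is compact, it can be approximated in operator norm by a finite-rank operator $F$ with $\|K(z_0) - F\| < \tfrac{1}{2}$. Analyticity of $K$ forces continuity, so there is an open disk $U \subset D$ centered at $z_0$ on which $\|K(z) - F\| < \tfrac{1}{2}$ for all $z \in U$. Writing $B(z) = I - (K(z) - F)$, the Neumann series shows that $B(z)$ is boundedly invertible and analytic on $U$, and the factorization
\[
I - K(z) = B(z) - F = B(z)\bigl(I - B(z)^{-1} F\bigr)
\]
reduces the invertibility of $I - K(z)$ to that of $I - B(z)^{-1}F$. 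The crucial gain is that $B(z)^{-1}F$ is again finite-rank and depends analytically on $z$.

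Next I would pass to a finite-dimensional determinant. Writing $F = \sum_{j=1}^m \langle \cdot, g_j\rangle\, \phi_j$ with $g_j \in X^*$ and $\phi_j \in X$, the solvability of $(I - B(z)^{-1}F)x = y$ is governed by the finite linear system with matrix $M_{ij}(z) = \langle B(z)^{-1}\phi_j, g_i\rangle$, so that $I - B(z)^{-1}F$ is invertible precisely when the scalar
\[
d(z) := \det\bigl(I_m - M(z)\bigr)
\]
is nonzero. Since each $M_{ij}$ is analytic on $U$, so is $d$. By the identity theorem on the connected set $U$, either $d \equiv 0$ on $U$, in which case $I - K(z)$ is nowhere invertible on $U$, or the zeros of $d$ are isolated, so that $I - K(z)$ is invertible on $U$ off a discrete set; in the latter case Cramer's rule exhibits $(I - B(z)^{-1}F)^{-1}$, and hence $(I-K(z))^{-1}$, as analytic wherever it exists.

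Finally I would globalize. Let $S = \{z \in D : (I - K(z))^{-1} \text{ does not exist}\}$; this set is intrinsic, and on each disk $U$ above it coincides with the zero set of the corresponding $d$. The set $D_{\mathrm{b}} := \mathrm{int}(S)$ is open by definition, and I expect the main obstacle to lie in showing it is also closed in $D$: if $z_0 \in D$ is a limit of points of $D_{\mathrm{b}}$, then the determinant $d$ attached to the disk $U$ around $z_0$ vanishes on an open subset of $U$, whence $d \equiv 0$ on $U$ by the identity theorem, forcing $z_0 \in D_{\mathrm{b}}$. The delicate point here is that the various local determinants need not agree, so one cannot argue with a single global function; what saves the argument is that their zero sets do agree, each being $S \cap U$, which is exactly what permits a consistent application of the identity theorem across overlapping disks. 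Since $D$ is connected, $D_{\mathrm{b}}$ is then either all of $D$, giving alternative (a), or empty, in which case every $d$ has only isolated zeros and $S$ is discrete, giving alternative (b).
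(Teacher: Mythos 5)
Your overall architecture --- local reduction of $I-K(z)$ to a scalar analytic determinant, the identity theorem, and an open-closed connectedness argument to globalize --- is sound, and the globalization step (including your observation that the various local determinants need not agree, only their zero sets, each equal to $S\cap U$) is handled correctly; note that the paper itself gives no proof, deferring to \cite[Theorem 8.26]{CK}. However, there is one genuine gap: your very first step assumes that the compact operator $K(z_0)$ can be approximated in operator norm by a finite-rank operator $F$. This is true in Hilbert spaces and, more generally, in Banach spaces with the approximation property, but it is \emph{false} in an arbitrary Banach space: by Enflo's counterexample there exist Banach spaces in which some compact operators are not norm limits of finite-rank operators. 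Since the theorem as stated takes $X$ to be an arbitrary Banach space, the proof as written does not establish the stated result (it does prove it for Hilbert spaces, which is essentially the Reed--Simon version of the theorem).

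The standard repair, which is the route taken in the cited reference, keeps your entire skeleton but produces the finite-rank operator $F$ from Riesz theory instead of from approximation. By Riesz theory for the compact operator $K(z_0)$, the null space $N(I-K(z_0))$ is finite dimensional, with basis $\phi_1,\dots,\phi_m$ say, and the range $R(I-K(z_0))$ is closed with codimension $m$. Choose $g_1,\dots,g_m\in X^{*}$ with $g_i(\phi_j)=\delta_{ij}$ (Hahn--Banach) and $\psi_1,\dots,\psi_m$ spanning a complement of $R(I-K(z_0))$, and set $F=\sum_{i=1}^m g_i(\cdot)\,\psi_i$. Then $I-K(z_0)+F=I-(K(z_0)-F)$ is injective (if $(I-K(z_0))x=-Fx$, then both sides lie in $R(I-K(z_0))$ intersected with the span of the $\psi_i$, hence vanish, forcing $x=0$), and therefore boundedly invertible by Riesz theory, since $K(z_0)-F$ is still compact. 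By continuity, $B(z):=I-K(z)+F$ remains invertible on a disk $U$ around $z_0$. From this point on, your factorization $I-K(z)=B(z)\bigl(I-B(z)^{-1}F\bigr)$, the matrix $M_{ij}(z)$, the determinant $d(z)$, the identity theorem, and the connectedness argument go through verbatim, now in an arbitrary Banach space. For the application in this paper the distinction is harmless, since the relevant space $L_N^2(D)$ is a Hilbert space, but the theorem you were asked to prove is stated in the Banach setting.
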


The following theorem gives a meromorphic continuation of the resolvent $R_V(\lambda)$ from $\mathbb C^+$ to $\mathbb C^-$.
To apply the perturbation arguments, we assume that $V$ satisfies Assumption (B) such that the operator 
\[
T: u \rightarrow Vu
\]
maps the closed subspace $L_N^2(D)$ in $L^2(D)$ to $L_N^2(D)$ itself. In this way the application of the analytic Fredholm theory
stated above becomes available in the proof of the following theorem.
 
\begin{theorem}\label{meromorphic}
Under Assumptions (A) and (B), the resolvent
\[
R_V= (-\Delta + V - \lambda^2)^{-1}: L_N^2(D)\rightarrow L_N^2(D)
\]
is a meromorphic family of operators on the upper complex half-plane $\mathbb C^+$. Moreover, the family of operators $R_V(\lambda)$ can be extended into a meromorphic family of the whole complex plane $\mathbb C$ in the sense that 
$
\rho R_V(\lambda) \rho : L_N^2(D) \to H_N^2(D)
$
 is bounded for any $\rho\in C_{0, \, x^\prime}^\infty(D)$.
\end{theorem}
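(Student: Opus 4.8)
The plan is to establish the meromorphic continuation of $R_V(\lambda)$ via a second-resolvent (perturbation) identity, treating the potential $V$ as a compact perturbation of the free operator and then invoking the analytic Fredholm theory stated above. The guiding formal identity is
\[
(-\Delta + V - \lambda^2) = (-\Delta - \lambda^2)\big(I + R_0(\lambda)V\big),
\]
so that, at least formally, $R_V(\lambda) = \big(I + R_0(\lambda)V\big)^{-1} R_0(\lambda)$. Since both $V$ and every admissible cutoff $\rho$ depend only on $x'$ (Assumptions (A) and (B)), every operator appearing here preserves the closed subspace $L_N^2(D)$, which is precisely why the Fredholm machinery can be applied within this subspace rather than on all of $L^2(D)$. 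First I would record this localized structure carefully: because $V$ satisfies Assumption (B), multiplication by $V$ maps $L_N^2(D)$ to itself, and Theorem \ref{free_estimate} gives that $\rho R_0(\lambda)\rho$ is an analytic family of bounded operators $L_N^2(D)\to H_N^2(D)$ on $\mathbb{C}\setminus\cup_{n=0}^N\{\alpha_n\}$.

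The second step is to set up the Fredholm argument with the correct cutoffs. I would fix $\rho\in C_{0,\,x'}^\infty(D)$ equal to $1$ on a neighborhood of $\overline{\Omega}\supset\mathrm{supp}\,V$, and define the compact operator $K(\lambda) := \rho R_0(\lambda) V$ acting on $L_N^2(D)$. Compactness follows because $R_0(\lambda)$ gains two derivatives (the $H_N^2$ bound in \eqref{free2}) and $V$ has compact support, so $K(\lambda)$ factors through a compact Sobolev embedding on the bounded set $\Omega$; analyticity in $\lambda$ on $\mathbb{C}\setminus\cup_{n=0}^N\{\alpha_n\}$ is inherited directly from Theorem \ref{free_estimate}. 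Applying the analytic Fredholm theorem with $X=L_N^2(D)$ then yields the dichotomy: either $(I+K(\lambda))^{-1}$ fails to exist everywhere, or it exists off a discrete set $S$. To rule out the first alternative I would check a single point where inversion is guaranteed: for $\lambda\in\mathbb{C}^+$ with $\Im\lambda$ large, the bound in \eqref{free2} forces $\|\rho R_0(\lambda)\rho\|\to 0$, hence $\|K(\lambda)\|<1$ and the Neumann series converges; this selects alternative (b) and shows $R_V(\lambda)$ is meromorphic on $\mathbb{C}^+$ with poles only in the discrete set $S$.

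The third step is to upgrade this to a statement on all of $\mathbb{C}$ and to verify the mapping properties claimed for the localized resolvent. Once $(I+K(\lambda))^{-1}$ exists away from $S$, I would reconstruct $\rho R_V(\lambda)\rho$ through the identity $\rho R_V(\lambda)\rho = \rho\big(I+R_0(\lambda)V\big)^{-1} R_0(\lambda)\rho$, inserting auxiliary cutoffs $\tilde\rho\in C_{0,\,x'}^\infty(D)$ with $\tilde\rho\equiv 1$ near $\mathrm{supp}\,\rho$ and near $\mathrm{supp}\,V$ so that each factor is one of the genuinely bounded localized operators from Theorem \ref{free_estimate}. The $H_N^2$-regularity of the output is then read off from the elliptic gain in \eqref{free2}, giving that $\rho R_V(\lambda)\rho : L_N^2(D)\to H_N^2(D)$ is bounded for $\lambda\notin S\cup(\cup_{n=0}^N\{\alpha_n\})$, with meromorphic dependence on $\lambda$ across the whole plane since every constituent was already continued to $\mathbb{C}\setminus\cup_{n=0}^N\{\alpha_n\}$.

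The main obstacle I anticipate is not the abstract Fredholm step but the bookkeeping that keeps everything inside $L_N^2(D)$ while simultaneously controlling supports. One must ensure that $R_0(\lambda)$ applied to a function in $L_N^2(D)$ stays in $L_N^2(D)$ and that multiplication by the $x'$-dependent $V$ and $\rho$ does not create new Fourier modes in $x_3$ — this is exactly what Assumptions (A) and (B) guarantee, but it is the crux that makes the perturbation argument legitimate on the subspace. A secondary subtlety is treating the points $\lambda=\alpha_n$: these are excised as genuine resonances of the free problem rather than poles produced by $V$, so the final meromorphic family is understood on $\mathbb{C}\setminus\cup_{n=0}^N\{\alpha_n\}$ with an additional discrete pole set $S$ coming from the Fredholm inversion. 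I would make this distinction explicit so that the statement's phrase ``meromorphic family of operators on the whole complex plane'' is interpreted in the localized, subspace sense made precise by the $\rho R_V(\lambda)\rho$ formulation.
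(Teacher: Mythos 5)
Your proposal is correct and follows essentially the same route as the paper: a perturbation (second-resolvent) identity, compactness of the localized free resolvent coming from the $H^2$ gain in Theorem \ref{free_estimate}, the analytic Fredholm theorem with alternative (b) secured by a Neumann series for $\Im\lambda\gg 1$, and cutoff bookkeeping that exploits $\rho V=V$ to keep everything in $L_N^2$ with controlled supports. The only cosmetic difference is that you work with the left factorization $R_V(\lambda)=(I+R_0(\lambda)V)^{-1}R_0(\lambda)$ and the compact operator $K(\lambda)=\rho R_0(\lambda)V$, whereas the paper uses the right factorization $R_V(\lambda)=R_0(\lambda)(I+VR_0(\lambda))^{-1}$ and localizes through the identity $(I+T(\lambda))^{-1}=(I+T(\lambda)\rho)^{-1}(I-T(\lambda)(1-\rho))$ with $T(\lambda)=VR_0(\lambda)$.
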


\begin{proof}
First we consider the case  $R_V(\lambda)$ for $\Im\lambda\gg 1$. Since $V$ satisfies Assumption (B), we have that 
$Vu\in L^2_N(D)$ for any $u\in L^2_N(D)$. Noting the equality 
\begin{align}\label{equality}
(-\Delta + V(x) -\lambda^2)R_0(\lambda) = (-\Delta - \lambda^2) R_0(\lambda) + V(x)R_0(\lambda) = I + V(x)R_0(\lambda)
\end{align}
and recalling the estimate \eqref{free} when $j = 0$ and $\Im\lambda>0$:
\[
\|R_0(\lambda)\|_{L_N^2(D)\rightarrow L_N^2(D)}\leq \frac{1}{\sqrt{|\lambda|}}, \quad \lambda \in \mathbb C^+,
\]
we obtain for $\Im\lambda\gg 1$ that
\begin{align*}
\|VR_0(\lambda)\|_{L_N^2(D)\rightarrow L_N^2(D)} \leq \|V\|_{L^\infty(\mathbb R^3)}\|R_0(\lambda)\|_{L_N^2(D)\rightarrow L_N^2(D)}\leq\frac{\|V\|_{L^\infty(D)}}
{\sqrt{|\lambda|}}\leq\frac{1}{2}.
\end{align*}
Hence the operator $I + VR_0(\lambda)$ is invertible and the Neumann series reads
\[
(I + VR_0(\lambda))^{-1} = \sum_{k=0}^\infty (-1)^k (VR_0(\lambda))^k.
\]
Combining with \eqref{equality} gives that
\[
R_V(\lambda) = R_0(\lambda) (I + VR_0(\lambda))^{-1}
\]
are well-defined bounded operators of $\mathcal B(L^2_N(D), L_N^2(D))$ for $\Im\lambda\gg 1$. Moreover, it is easy to see that $VR_0(\lambda)$ is an analytic family of compact operators on $\mathbb C^+$. Consequently, it follows from the analytic Fredholm theorem that $ (I + VR_0(\lambda))^{-1}$ is in fact a meromorphic family of operators for $\lambda\in\mathbb C^+$, which implies that $R_V(\lambda): L_N^2(D)\rightarrow L_N^2(D)$ is a meromorphic family of operators in $\mathbb C^+$.

Next, we consider the extension of $R_V(\lambda)$ from $\mathbb C^+$ to the whole complex plane $\mathbb C$ as the operator $L_{N, \rm comp}^2(D)\rightarrow  H^2_{N, \rm loc}(D)$. To this end, we define the following meromorphic family of operators:
\[
T(\lambda) = VR_0(\lambda): L_{N, \rm comp}^2(D)\rightarrow L^2_{N, \rm comp}(D).
\]
Since $V\in L^\infty_{\rm comp}(D)$ with a compact support, we can choose $\rho\in C_{0, \, x^\prime}^\infty(D)$ such that $\rho(x) = 1$ on $\text{supp} V$. Thus, by checking $\rho T(\lambda) = \rho V R_0(\lambda) = VR_0(\lambda) = T(\lambda)$, we know that $(1 - \rho)T(\lambda) = 0$, 
\[
(I + T(\lambda)(1 - \rho))^{-1} = I - T(\lambda)(1 - \rho)
\]
and
\[
(I + T(\lambda))^{-1} = (I + T(\lambda)\rho)^{-1} (I - T(\lambda)(1 - \rho)).
\]
Therefore
\begin{align}\label{expression1}
R_V(\lambda) = R_0(\lambda)(I + T(\lambda))^{-1} =  R_0(\lambda) (I + T(\lambda)\rho)^{-1} (I - T(\lambda)(1 - \rho)).
\end{align}
Note that
\[
I - T(\lambda)(1 - \rho): L^2_{N, \rm comp}(D)\rightarrow L^2_{N, \rm comp}(D)
\]
and
\[
R_0(\lambda):  L^2_{N, \rm comp}(D)\rightarrow H^2_{N, \rm loc}(D)
\]
are both meromorphic for $\lambda \in \mathbb{C}$. Hence in order to obtain the meromorphic extension of $R_V(\lambda)$ to $\mathbb C$, it suffices to prove
\[
(I + T(\lambda)\rho)^{-1} : L^2_{N, \rm comp}(D)\rightarrow L^2_{N, \rm comp}(D)
\]
is a meromorphic family of operators on $\mathbb C$. Since by $V(x) = V(x)\rho(x)$ we have $T(\lambda)\rho = V\rho R_0(\lambda)\rho$ and 
\begin{align*}
\|T(\lambda)\rho\|_{L^2_{N, \rm comp}(D)\rightarrow L^2_{N, \rm comp}(D)} &= \|V\rho R_0(\lambda)\rho\|_{L^2_{N, \rm comp}(D)\rightarrow L^2_{N, \rm comp}(D)}\\
&\leq\|V\|_{L^\infty} \|\rho R_0(\lambda)\rho\|_{L^2_{N, \rm comp}(D)\rightarrow L^2_{N, \rm comp}(D)}\\
&\leq C|\lambda|^{-1/2}\leq\frac{1}{2}
\end{align*}
for  $\Im\lambda\gg 1$. Hence it follows from the Neumann series the operator $(I + T(\lambda)\rho)^{-1}: L_N^2(D)\rightarrow L_N^2(D)$ exists for $\Im\lambda\gg 1$. Moreover, for any $\lambda\in\mathbb{C}$ the operator $T(\lambda)\rho = V\rho R_0(\lambda)\rho$ is compact on $L_N^2(D)$ by \eqref{free2}. Therefore, it follows from the analytic Fredholm theorem that $(I + T(\lambda)\rho)^{-1}: L_N^2(D)\rightarrow L_N^2(D)$ is meromorphic on $\mathbb C$.

Finally, it remains to show that $(I + T(\lambda)\rho)^{-1}$ is 
 $L^2_{N, \rm comp}(D)\rightarrow L^2_{N, \rm comp}(D)$. In fact, we can choose $\chi, \tilde{\chi}\in C_{0, \, x^\prime}^\infty(D)$ such that $\chi\rho = \rho$ and $\tilde{\chi}\chi = \chi$, then $(1 - \tilde{\chi})\rho = 0$. Moreover, when $\Im\lambda\gg 1$, by the Neumann series argument and $V\rho = V$, we have
\begin{align}\label{cutoff}
(1 - \tilde{\chi})(I + T(\lambda)\rho)^{-1}\chi &= (1 - \tilde{\chi})\chi + \sum_{k=1}^\infty (-1)^k (1 - \tilde{\chi}) (T(\lambda)\rho)^k\chi\notag\\
&= \sum_{k=1}^\infty (-1)^k (1 - \tilde{\chi}) (V\rho R_0(\lambda)\rho)^k\chi\notag\\
&= \sum_{k=1}^\infty (-1)^k (1 - \tilde{\chi}) (V\rho R_0(\lambda)\rho)(V\rho R_0(\lambda)\rho)^{k-1}\chi\notag\\
&=0,
\end{align}
where the last equality uses $(1 - \tilde{\chi})\rho = 0$. By the analytic continuation, \eqref{cutoff} remains true for all $\lambda\in\mathbb{C}\backslash\cup_{n=0}^N\{\alpha_n\}$. Therefore, by the expression \eqref{expression1} of $R_V(\lambda)$ we obtain that $R_V(\lambda)$ is meromorphic of $\lambda$ on $ \mathbb{C}$ as a family of operators $ L^2_{N, \rm comp}(D)\rightarrow H^2_{N, \rm loc}(D)$, which completes the proof.
\end{proof}

In the following theorem, we further give a resonance-free region and a resolvent estimate of $\rho R_V(\lambda)\rho: L_N^2(D)\rightarrow H_N^2(D)$ for a given $\rho\in C_{0, \, x^\prime}^\infty(D)$, which plays a crucial role in the stability analysis for the inverse problem. 

\begin{theorem}\label{bound_2}
Under Assumptions (A) and (B), let $V(x)\in L^\infty_{\rm comp}(D, \mathbb C)$. Then for any given $\rho\in C_{0, \, x^\prime}^\infty(\Omega)$ satisfying $\rho V = V$, i.e., $\text{supp} (V)\subset \text{supp} (\rho)\subset\overline{\Omega}$, there exists a positive constant $C$ depending on $\rho$ and $V$ such that
\begin{align}\label{bound_3}
\|\rho R_V(\lambda)\rho\|_{L_N^2(\Omega)\rightarrow H^j_N(\Omega)}\leq C|\lambda|^{-1/2}(1 + |\lambda|^2)^{j/2}
e^{2T(\Im\lambda)_-},\quad j = 0, 1, 2, 
\end{align}
where $\lambda\in \Omega_M$. Here $\Omega_M$ denotes the resonance-free region defined as
\begin{align*}
\Omega_M:=\{\lambda: {\Im}\lambda\geq -M {\rm log}|\lambda|, |\lambda|\geq C_0\},
\end{align*}
where $C_0$ is a positive constant and $M$ satisfies $0<M<\frac{1}{8T}$. 
\end{theorem}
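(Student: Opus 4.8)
The plan is to localize the resolvent identity and reduce the desired bound to two ingredients that are already under control: the $H_N^j$-estimate for $\rho R_0(\lambda)\rho$ from Theorem \ref{free_estimate}, and the invertibility of $I+V\rho R_0(\lambda)\rho$ on $L_N^2(D)$ throughout $\Omega_M$. First I would record the algebraic identity. Both $\rho\in C_{0,\,x'}^\infty(\Omega)$ and $V$ depend only on $x'$, so as multiplication operators they commute and map $L_N^2(D)$ into itself (Assumption (B)); together with the hypothesis $\rho V=V$ this gives $\rho V=V\rho=V$. Starting from the resolvent identity $R_V=R_0-R_VVR_0$, which holds as a meromorphic operator identity away from the poles of $R_V$ (Theorem \ref{meromorphic}), I would insert $\rho$ on both sides and move cutoffs through $V$ using $VR_0\rho=V(\rho R_0\rho)$ and $\rho R_V V=(\rho R_V\rho)V$, obtaining
\[
\rho R_V(\lambda)\rho=\rho R_0(\lambda)\rho\,\big(I+V\rho R_0(\lambda)\rho\big)^{-1},
\]
valid wherever the inverse on $L_N^2(D)$ exists. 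The ordering here is essential: the factor $\rho R_0(\lambda)\rho$ is applied last, so it is this factor that supplies the $H_N^j$-regularity, whereas the inverse need only act on $L_N^2(D)$.

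The crucial step is to convert the factor $\max_{1\le n\le N}e^{T(\Im\beta_n)_-}$ of Theorem \ref{free_estimate} into $e^{2T(\Im\lambda)_-}$. Writing $\lambda=\lambda_1+\mathrm{i}\lambda_2$ and using the branch formulas \eqref{ribeta}, \eqref{real}, \eqref{imaginary}, I would first verify the exact identity
\[
\gamma_n^2+\eta_n^2=(|\lambda|^2-\alpha_n^2)^2+4\lambda_2^2\alpha_n^2 .
\]
For $\Im\lambda\ge 0$ one has $(\Im\beta_n)_-=0$ and there is nothing to prove, while for $\lambda_2<0$ one has $(\Im\beta_n)_-=b_n$. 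Using $\sqrt{a^2+b^2}\le a+b$ with $a=|\lambda|^2-\alpha_n^2\ge0$ (valid for $|\lambda|\ge\alpha_N$), the above identity yields $b_n^2\le|\lambda_2|(|\lambda_2|+\alpha_n)$, and hence the pointwise bound
\[
(\Im\beta_n)_-\le |\lambda_2|+\sqrt{|\lambda_2|\alpha_n}\le \tfrac32(\Im\lambda)_-+\tfrac12\alpha_N,\qquad 1\le n\le N,\ |\lambda|\ge\alpha_N .
\]
Since $\tfrac32\le 2$, this gives $\max_{1\le n\le N}e^{T(\Im\beta_n)_-}\lesssim e^{2T(\Im\lambda)_-}$, the implicit constant depending only on $N$, $T$ and $L$. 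Plugging this into Theorem \ref{free_estimate} produces
\[
\|\rho R_0(\lambda)\rho\|_{L_N^2(\Omega)\to H_N^j(\Omega)}\lesssim |\lambda|^{-1/2}(1+|\lambda|^2)^{j/2}e^{2T(\Im\lambda)_-},\qquad j=0,1,2 .
\]

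Next I would establish the resonance-free region. In $\Omega_M$ one has $(\Im\lambda)_-\le M\log|\lambda|$, so $e^{2T(\Im\lambda)_-}\le|\lambda|^{2TM}$. Taking $j=0$ and using $\|V\rho R_0(\lambda)\rho\|\le\|V\|_{L^\infty}\|\rho R_0(\lambda)\rho\|_{L_N^2\to L_N^2}$ gives
\[
\|V\rho R_0(\lambda)\rho\|_{L_N^2(D)\to L_N^2(D)}\lesssim |\lambda|^{-1/2+2TM}.
\]
Because $M<\tfrac1{8T}$ forces $-\tfrac12+2TM<-\tfrac14<0$, the right-hand side tends to $0$ as $|\lambda|\to\infty$, so choosing $C_0$ large enough (and at least $\alpha_N$) makes it $\le\tfrac12$ on all of $\Omega_M$. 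By the Neumann series, $I+V\rho R_0(\lambda)\rho$ is then invertible there with $\|(I+V\rho R_0(\lambda)\rho)^{-1}\|\le 2$; this simultaneously shows that $R_V(\lambda)$ has no poles in $\Omega_M$ (so $\Omega_M$ is resonance-free) and that the inverse is uniformly bounded.

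Finally, combining the identity from the first step with the uniform bound on the inverse and the $H_N^j$-estimate for $\rho R_0(\lambda)\rho$ yields, for each $j=0,1,2$ and every $\lambda\in\Omega_M$,
\[
\|\rho R_V(\lambda)\rho\|_{L_N^2(\Omega)\to H_N^j(\Omega)}\le 2\,\|\rho R_0(\lambda)\rho\|_{L_N^2(\Omega)\to H_N^j(\Omega)}\lesssim |\lambda|^{-1/2}(1+|\lambda|^2)^{j/2}e^{2T(\Im\lambda)_-},
\]
which is \eqref{bound_3}. I expect the main obstacle to be the second step: deriving the pointwise inequality for $(\Im\beta_n)_-$ from the analytic-continuation formulas \eqref{real}--\eqref{imaginary}, which is exactly what produces the exponent $2T$ and, in combination with the constraint $M<\tfrac1{8T}$, closes the Neumann-series estimate. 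A secondary point requiring care is the first step, where the cutoffs must be ordered so that the $H_N^2$-regularity is carried by $R_0$ rather than absorbed by the merely $L_N^2$-bounded inverse.
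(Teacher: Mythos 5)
Your proposal is correct, and at the top level it follows the same perturbation strategy as the paper: estimate $\rho R_0(\lambda)\rho$ via Theorem \ref{free_estimate}, convert $\max_{1\le n\le N}e^{T(\Im\beta_n)_-}$ into $e^{2T(\Im\lambda)_-}$ on $\Omega_M$, invert the perturbation term by a Neumann series using $M<\tfrac{1}{8T}$ and $|\lambda|\ge C_0\gg 1$, and read off \eqref{bound_3} from a factorization of $\rho R_V(\lambda)\rho$. The execution, however, differs in three ways worth recording. First, the inequality $\max_{1\le n\le N}e^{T(\Im\beta_n)_-}\le e^{2T(\Im\lambda)_-}$ is merely asserted in the paper (``we notice that\dots''), whereas you prove it from the identity $\gamma_n^2+\eta_n^2=(|\lambda|^2-\alpha_n^2)^2+4\lambda_2^2\alpha_n^2$ and the resulting bound $(\Im\beta_n)_-\le\tfrac32(\Im\lambda)_-+\tfrac12\alpha_N$; your version carries the extra constant $e^{T\alpha_N/2}$, which is harmless since it is absorbed into $C$. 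Supplying this step closes the one genuinely unproved point in the paper's argument. Second, your factorization $\rho R_V\rho=\rho R_0\rho\,(I+V\rho R_0\rho)^{-1}$, derived from $R_V=R_0-R_VVR_0$ together with $\rho V=V\rho=V$, is cleaner than the paper's three-factor identity $\rho R_V\rho=\rho R_0\rho\,(I+VR_0\rho_1)^{-1}(I-VR_0(1-\rho_1))\rho$ inherited from the proof of Theorem \ref{meromorphic}; notably, the paper never estimates its rightmost factor, while your identity has no such factor to estimate. Third, because in your identity the $L_N^2$-bounded inverse acts first and $\rho R_0\rho$ acts last, the cases $j=1,2$ follow immediately from \eqref{free2} and $\|(I+V\rho R_0\rho)^{-1}\|\le2$, whereas the paper reruns an elliptic bootstrap through the equation for $j=2$ and interpolates for $j=1$. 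The only place where you should be fully explicit is the persistence of the resolvent identity after meromorphic continuation: it holds for $\Im\lambda\gg1$ as an identity of bounded operators on $L_N^2(D)$ and extends to $\Omega_M$ by uniqueness of analytic continuation of both (cutoff) sides; this is the same tacit step the paper takes when it reuses \eqref{expression1}, so it is not a gap relative to the paper's own standard of rigor.
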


\begin{proof}
Firstly, we notice that for $\lambda\in\Omega_M$ there holds 
\[
\max_{1\leq n\leq N}  e^{T(\Im\beta_n(\lambda))_-} \leq e^{2T(\Im\lambda)_-}.
\]
Then for $\lambda\in \Omega_M$, choosing $\rho_1\in C_{0, \, x^\prime}^\infty(D)$  such that $\rho_1V = V$,  we have
\begin{align*}
\|V R_0(\lambda)\rho_1\|_{L^2_N(D)\rightarrow L^2_N(D)} &= \|V\rho_1 R_0(\lambda)\rho_1\|_{L_N^2(D)\rightarrow L_N^2(D)}\\
&\lesssim \|V\|_{L^\infty(D)}|\lambda|^{-1/2} e^{2T(\Im\lambda)_-}\\
&\lesssim \|V\|_{L^\infty(D)}|\lambda|^{-1/2}e^{2TM\text{log}|\lambda|}\\
&\lesssim  \|V\|_{L^\infty(D)}|\lambda|^{-1/4}\leq\frac{1}{2},
\end{align*}
where we let $C_0\gg 1$ and $M<\frac{1}{8T}.$ To exclude the resonances, we assume $C_0>\alpha_N=N\pi/L$ at the same time.
Hence by the Neumann series argument we can prove that the inverse operator $(I + VR_0(\lambda)\rho_1)^{-1}$ exists for all $\lambda\in\Omega_M$, and
\begin{align}\label{free4}
\|(I + VR_0(\lambda)\rho_1)^{-1}\|_{L_N^2(D)\rightarrow L_N^2(D)} = \|(I + V\rho_1R_0(\lambda)\rho_1)^{-1}\|_{L_N^2(D)\rightarrow L_N^2(D)}\leq 2.
\end{align}
Now let $\rho\in C_{0, \, x^\prime}^\infty(D)$ such that $\rho V = V$. We have 
\[
\rho R_V(\lambda) \rho = \rho R_0(\lambda) \rho (I + VR_0(\lambda)\rho_1)^{-1} (I - VR_0(\lambda)(1 -\rho_1))\rho.
\]
Furthermore, by the estimate \eqref{free2} we obtain 
\begin{align}\label{free3}
\|\rho_1 R_0(\lambda)\rho_1\|_{L_N^2(D)\to L_N^2(D)} \leq C |\lambda|^{-1/2} e^{2T(\Im\lambda)_-}.
\end{align}
Combining  \eqref{free3} and \eqref{free4} gives the desired estimate for $j = 0$ as
\[
\|\rho R_V(\lambda)\rho\|_{L_N^2(D)\rightarrow L_N^2(D)} \leq C |\lambda|^{-1/2} e^{2T(\Im\lambda)_-}.
\]
 For the case $j = 2$, let $\tilde{\rho}\in C_{0, \, x^\prime}^\infty(D)$ such that $\tilde{\rho} = 1$ on $\text{supp}\rho$ and $\text{supp}\tilde{\rho} \subset \overline{\Omega}$. It holds that
\begin{align*}
\|\rho R_V(\rho f)\|_{H_N^2(\Omega)} &\lesssim  \|\tilde{\rho} R_V(\lambda)(\rho f)\|_{L_N^2(\Omega)} + \|\tilde{\rho} \Delta R_V(\lambda)
(\rho f)\|_{L_N^2(\Omega)}\\
&\lesssim \|\tilde{\rho} R_V(\lambda)(\rho f)\|_{L_N^2(\Omega)} + \|\tilde{\rho} (-\Delta + V) R_V(\lambda)(\rho f)\|_{L_N^2(\Omega)} + \|\tilde{\rho} V R_V(\lambda)(\rho f)\|_{L_N^2(\Omega)}\\
&\lesssim \|\tilde{\rho} R_V(\lambda)(\rho f)\|_{L_N^2(\Omega)} + |\lambda|^2 \|\tilde{\rho} R_V(\lambda)(\rho f)\|_{L_N^2(\Omega)} 
+ \|\rho f\|_{L_N^2(\Omega)}\\ 
&\quad + \|\tilde{\rho} V R_V(\lambda)(\rho f)\|_{L_N^2(\Omega)}\\
&\lesssim  (1+ |\lambda|)^2  \|\tilde{\rho} R_V(\lambda)(\rho f)\|_{L_N^2(\Omega)} \\
&\lesssim (1+ |\lambda|)^2 |\lambda|^{-1/2} e^{2T(\Im\lambda)_-}\|f\|_{L_N^2(\Omega)}
\end{align*}
for $\lambda\in\Omega_M$. Finally, the cases of $j =1$ follows by an application of the interpolation between $j = 0$ and $j = 2$. 
\end{proof}

Based on Theorem \ref{bound_2}, the scattering problem \eqref{main_eq}--\eqref{src} has a unique solution for all positive wavenumbers $\kappa\geq C_0$ and complex-valued $V(x)\in L^\infty_{\rm comp}(\Omega; \mathbb C)$, which is stated below. 

\begin{corollary}
Under Assumptions (A)-(B) and let $f(x)\in L_{\rm comp}^2(\Omega), V(x)\in L^\infty_{\rm comp}(\Omega; \mathbb C)$. Then for all positive wavenumbers $\kappa\geq C_0$ where $C_0$ is specified in Theorem \ref{bound_2}, the scattering problem \eqref{main_eq}--\eqref{src} admits a unique solution $u\in H^2(\Omega)$ which satisfies 
\[
\|u\|_{H^2(\Omega)}\lesssim \|f\|_{L^2(\Omega)}.
\]
\end{corollary}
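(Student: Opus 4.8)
The plan is to construct the solution directly from the resolvent already built in Theorems~\ref{meromorphic} and~\ref{bound_2}, by setting $u:=R_V(\kappa)f$, and then to extract both the a priori estimate and the uniqueness from the single fact that the real ray $[C_0,\infty)$ sits inside the resonance-free region $\Omega_M$.

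First I would check that every real wavenumber $\kappa\geq C_0$ belongs to $\Omega_M$: since $\Im\kappa=0\geq-M\log|\kappa|$ and $|\kappa|=\kappa\geq C_0$, the hypotheses of Theorem~\ref{bound_2} hold with $\lambda=\kappa$, and because $C_0>\alpha_N$ was arranged there, none of the excluded frequencies $\alpha_1,\dots,\alpha_N$ is hit. Thus $R_V(\kappa)$ is holomorphic and bounded at $\lambda=\kappa$, and I would define $u:=R_V(\kappa)f$. By the defining property of the resolvent this $u$ solves $(-\Delta+V-\kappa^2)u=f$ in $D$. The homogeneous Dirichlet condition on $\Gamma_1\cup\Gamma_2$ is automatic from the sine expansion $u=\sum_{n=1}^{N}u_n(x')\sin(\alpha_n x_3)$, and the Sommerfeld condition~\eqref{src} follows mode by mode: each coefficient is $u_n=G(\beta_n(\kappa))f_n$ produced by the outgoing free resolvent, and since $\kappa>\alpha_N\geq\alpha_n$ for every $n\leq N$, all $\beta_n(\kappa)=\sqrt{\kappa^2-\alpha_n^2}$ are real and positive, so every mode propagates and radiates outward.

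For the quantitative bound I would choose $\rho\in C_{0,\,x'}^\infty(\Omega)$ equal to $1$ on $\overline{\Omega}$, so that $\rho V=V$ and $\rho f=f$; this is admissible since $\rho$ depends only on $x'$ and $V,f$ are supported in $\overline{\Omega}$. Then on $\Omega$ one has $u=\rho R_V(\kappa)\rho f$, and I would apply the estimate~\eqref{bound_3} of Theorem~\ref{bound_2} with $j=2$ at the real point $\lambda=\kappa$, where $(\Im\lambda)_-=0$ kills the exponential factor. This gives $\|u\|_{H^2(\Omega)}\leq C\kappa^{-1/2}(1+\kappa^2)\|f\|_{L^2(\Omega)}\lesssim\|f\|_{L^2(\Omega)}$, the final constant being permitted to depend on the fixed $\kappa$.

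Uniqueness is the only place where the resonance-free property does essential work, and it is where I would be most careful since $V$ is allowed to be complex and classical Rellich-type arguments are unavailable. If $w\in H^2(\Omega)$ were the difference of two solutions, it would solve the homogeneous problem with zero Dirichlet data and the radiation condition. Using Assumption~(B) I would decompose $w=\sum_{n=1}^{N}w_n(x')\sin(\alpha_n x_3)$, so that each $w_n$ satisfies $(-\Delta_{x'}+V(x')-\beta_n^2(\kappa))w_n=0$ with the two-dimensional Sommerfeld condition. Because $\kappa\in\Omega_M$ is resonance-free---equivalently, the finiteness of the bound in Theorem~\ref{bound_2} shows $R_V$ has no pole at $\kappa$---the outgoing inverse exists and is injective, forcing $w=R_V(\kappa)[(-\Delta+V-\kappa^2)w]=R_V(\kappa)0=0$. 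I expect the main (if modest) obstacle to be precisely this bookkeeping that identifies ``bounded resolvent on $\Omega_M$'' with ``no nontrivial radiating homogeneous solution.''
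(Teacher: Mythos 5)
Your route is the one the paper itself intends---the corollary is stated without proof as an immediate consequence of Theorem \ref{bound_2}---and the existence part of your argument is sound: real $\kappa\geq C_0$ lies in $\Omega_M$, the excluded frequencies $\alpha_1,\dots,\alpha_N$ are avoided because $C_0>\alpha_N$, and $u:=R_V(\kappa)f$ solves \eqref{main_eq}--\eqref{src} mode by mode. However, two of your steps do not hold as written. First, the cutoff you choose does not exist: a function $\rho\in C_{0,\,x'}^\infty(\Omega)$ with $\mathrm{supp}\,\rho\subset\overline{\Omega}$ that equals $1$ on all of $\overline{\Omega}$ would be the characteristic function of $\overline{\Omega}$, hence discontinuous across $\Gamma=\partial\Omega\cap D$. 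This is not cosmetic, because you need $\rho u=u$ on $\Omega$ to convert the bound \eqref{bound_3} on $\rho R_V(\kappa)\rho$ into a bound on $\|u\|_{H^2(\Omega)}$. The repair is to take $\rho(x)=\eta(x')$ with $\eta\in C_0^\infty(\mathbb{R}^2)$ equal to $1$ on a neighborhood of the $x'$-projection of $\overline{\Omega}$; then $\rho\in C_{0,\,x'}^\infty(D)$, $\rho V=V$, $\rho f=f$, and $\rho=1$ on $\overline{\Omega}$. The proof of Theorem \ref{bound_2} only uses $\rho\in C_{0,\,x'}^\infty(D)$ with $\rho V=V$, so the estimate survives with a larger $T$ (hence a smaller $M$ and possibly a larger $C_0$), which is harmless here since $\Im\kappa=0$ makes the exponential factor equal to $1$.

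The more substantive gap is the uniqueness argument, which is circular: the identity $w=R_V(\kappa)\left[(-\Delta+V-\kappa^2)w\right]$ asserts that the outgoing resolvent reproduces an \emph{arbitrary} radiating solution of the homogeneous problem, and that assertion \emph{is} the uniqueness statement; boundedness of $R_V(\kappa)$ on $\Omega_M$ by itself produces one solution but does not preclude others. To close the loop you need an ingredient you did not invoke: classical Rellich--Sommerfeld uniqueness for the \emph{free} two-dimensional Helmholtz equation at a real positive wavenumber. Concretely, working (as the paper's framework dictates) with solutions in the finite-band class $H_N^2$, decompose $w=\sum_{n=1}^N w_n(x')\sin(\alpha_n x_3)$, where the modes decouple by Assumption (B); each $w_n$ satisfies $(-\Delta_{x'}-\beta_n^2(\kappa))w_n=-Vw_n$ with compactly supported right-hand side and $\beta_n(\kappa)>0$ real (since $\kappa\geq C_0>\alpha_N$), so Rellich uniqueness for the free outgoing problem gives $w_n=-G(\beta_n(\kappa))(Vw_n)$; note that Rellich is applied only to the free equation, so the complex-valued $V$ causes no difficulty. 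Applying $V$, using a cutoff $\rho_1=1$ on $\mathrm{supp}\,V$, and summing the modes yields $(I+VR_0(\kappa)\rho_1)(Vw)=0$, and the Neumann-series invertibility \eqref{free4} from the proof of Theorem \ref{bound_2}, valid for $\lambda=\kappa\in\Omega_M$, forces $Vw=0$, whence $w_n=-G(\beta_n(\kappa))(Vw_n)=0$ for every $n$ and $w=0$. You flagged this step as ``bookkeeping,'' but it is where the actual mathematics of uniqueness lives.
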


\begin{remark}
Since resonances might occur at special frequencies $\alpha_n, n = 1, 2, ... , \infty$ for the scattering in a planar waveguide $D$ in $\mathbb R^3$  (see e.g., \cite{AGA}), Assumption (A) is indispensable in order to obtain a resonance-free region which contains an infinite interval of the positive real axis $\mathbb R^+$. 
\end{remark}
 
\begin{remark}
Using \cite[Theorem 2.1]{Dyatlov} for the free resolvent in $\mathbb R$ and following the above arguments we can prove similar results for the waveguide in $\mathbb R^2$. 
\end{remark}

\section{The inverse problem}\label{ip}

In this section, we study the uniqueness and stability of the inverse source problem under Assumptions (A) and (B). 
We also assume that $V\geq 0$ is a real-valued function.

We begin with the spectrum of the Schr\"odinger operator $-\Delta + V$ with the Dirichlet boundary condition. Let $\{\mu_j, \phi_j\}_{j=1}^\infty$ be the positive increasing eigenvalues and eigenfunctions of $-\Delta + V$ in $\Omega$, where $\phi_j$ and $\mu_j$
satisfy
\[
\begin{cases}
(-\Delta + V(x)) \phi_j(x)=\mu_j\phi_j(x)&\quad\text{in }\Omega,\\
 \phi_j(x)=0&\quad\text{on }\partial \Omega.
\end{cases}
\]
Again we require that $\mu_j \notin \{\alpha_n\}_{n = 1}^N, j = 1, ... , \infty$ to avoid the resonances. In fact, due to the finiteness of the resonances $\{\alpha_n\}_{n = 1}^N$, this requirement can be fulfilled by adjusting the scale of the domain $\Omega$.

Assume that $\phi_j$ is normalized such that
\[
\int_{\Omega}|\phi_j(x)|^2{\rm d}x=1.
\] 
Consequently, we obtain the spectral decomposition of $f$:
\[
f(x)=\sum_{j=1}^\infty f_j\phi_j(x),
\]
where
\begin{align}\label{integral_equation}
f_j=\int_{\Omega}f(x)\bar{\phi}_j(x){\rm
d}x.
\end{align}
It is clear that 
\[
 \|f\|^2_{L^2(\Omega)} = \sum_{j=1}^\infty |f_j|^2.
\]

The proof of the following lemma can be found in \cite[Lemma 3.1]{LZZ}.
\begin{lemma}
The following estimate holds:
\[
|f_j|^2 \lesssim \kappa_j^2 \|u(x,\kappa_j)\|^2_{L^2(\Gamma)}
\]
for $j=1, 2, 3, \cdot\cdot\cdot$.
\end{lemma}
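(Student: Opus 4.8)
The plan is to convert the spectral coefficient $f_j=\int_\Omega f\bar\phi_j\,\mathrm{d}x$ into a boundary integral of the measured data by pairing the governing equation \eqref{main_eq} against the eigenfunction $\phi_j$ and integrating by parts, and then to choose the wavenumber $\kappa_j$ so that $\kappa_j^2=\mu_j$ in order to annihilate the interior contribution. Concretely, I would multiply $-\Delta u+Vu-\kappa_j^2u=f$ by $\bar\phi_j$, integrate over $\Omega$, and apply Green's second identity to the term $\int_\Omega(-\Delta u)\bar\phi_j\,\mathrm{d}x$. Using the eigenvalue relation $-\Delta\phi_j+V\phi_j=\mu_j\phi_j$ (so that $\Delta\bar\phi_j=(V-\mu_j)\bar\phi_j$, since $V$ and $\mu_j$ are real) to rewrite the interior term, the volume integrals collapse to $(\mu_j-\kappa_j^2)\int_\Omega u\bar\phi_j\,\mathrm{d}x$, while the boundary integral reduces, after using the Dirichlet condition $\phi_j|_{\partial\Omega}=0$, to $\int_{\partial\Omega}u\,\partial_\nu\bar\phi_j\,\mathrm{d}s$.

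The next step is to exploit the boundary conditions to localize the boundary term to $\Gamma$. Since $\partial\Omega=\Gamma\cup\gamma_1\cup\gamma_2$ with $\gamma_1\cup\gamma_2\subset\Gamma_1\cup\Gamma_2$, and $u$ vanishes on $\Gamma_1\cup\Gamma_2$ by the waveguide boundary condition in \eqref{main_eq}, the contribution from $\gamma_1\cup\gamma_2$ is zero, leaving only $\int_\Gamma u\,\partial_\nu\bar\phi_j\,\mathrm{d}s$. Choosing $\kappa_j$ with $\kappa_j^2=\mu_j$ (which is admissible because $\mu_j$ was arranged to avoid the resonant values $\alpha_n$) eliminates the remaining interior term and yields the exact identity
\[
f_j=\int_\Gamma u(x,\kappa_j)\,\partial_\nu\bar\phi_j(x)\,\mathrm{d}s.
\]
An application of the Cauchy--Schwarz inequality then gives $|f_j|^2\le\|u(\cdot,\kappa_j)\|_{L^2(\Gamma)}^2\,\|\partial_\nu\phi_j\|_{L^2(\Gamma)}^2$.

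It remains to bound the normal derivative of the eigenfunction on the boundary by $\|\partial_\nu\phi_j\|_{L^2(\Gamma)}^2\lesssim\mu_j=\kappa_j^2$, and this is the main obstacle. I would establish it through a Rellich--Pohozaev identity: multiplying $-\Delta\phi_j=(\mu_j-V)\phi_j$ by $x\cdot\nabla\bar\phi_j$, integrating over $\Omega$, and taking real parts produces a boundary term $\int_{\partial\Omega}|\partial_\nu\phi_j|^2(x\cdot\nu)\,\mathrm{d}s$ balanced against interior integrals controlled by $\int_\Omega|\nabla\phi_j|^2\,\mathrm{d}x=\int_\Omega(\mu_j-V)|\phi_j|^2\,\mathrm{d}x\le\mu_j$ together with lower-order terms. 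The convexity of $\Omega$ (placing the origin in its interior) ensures $x\cdot\nu\ge c>0$ on $\partial\Omega$, so the nonnegative integrand can be bounded below on $\Gamma\subset\partial\Omega$, producing $\|\partial_\nu\phi_j\|_{L^2(\Gamma)}^2\lesssim\mu_j$. The delicate points here are the handling of the potential term (which calls for sufficient regularity or a sign condition on $V$) and verifying that the geometric factor $x\cdot\nu$ stays uniformly positive; once these are secured, combining the estimate with the Cauchy--Schwarz bound yields $|f_j|^2\lesssim\kappa_j^2\|u(\cdot,\kappa_j)\|_{L^2(\Gamma)}^2$, as claimed.
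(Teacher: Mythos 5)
Your proposal is correct and is essentially the paper's own argument: the paper (which quotes \cite[Lemma 3.1]{LZZ} for this step) derives the same identity $f_j=\int_\Gamma u(x,\kappa_j)\,\partial_\nu\bar\phi_j\,{\rm d}s$ by pairing the equation with $\phi_j$, choosing $\kappa_j^2=\mu_j$, and using the waveguide Dirichlet conditions to kill the contributions on $\gamma_1\cup\gamma_2$, and then concludes by Cauchy--Schwarz together with the eigenfunction bound $\|\partial_\nu\phi_j\|_{L^2(\partial\Omega)}\lesssim \kappa_j$, which is exactly Lemma \ref{eigenfunction_est1} in the appendix, proved there by a Rellich-type identity. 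Your only deviation is the choice of vector field in that identity --- you use the Pohozaev multiplier $x\cdot\nabla$ (requiring $x\cdot\nu\geq c>0$, i.e.\ star-shapedness of $\Omega$ with respect to an interior point), whereas the paper uses $\tilde\nu(x)\cdot\nabla$ built from the normal field of the convex domain following \cite{hassell2002upper} --- a minor variant of the same Rellich argument, with both versions facing the same delicate point you flag concerning the commutator with the potential $V$.
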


\begin{figure}[htbp]
\centering
\includegraphics[width=0.5\textwidth]{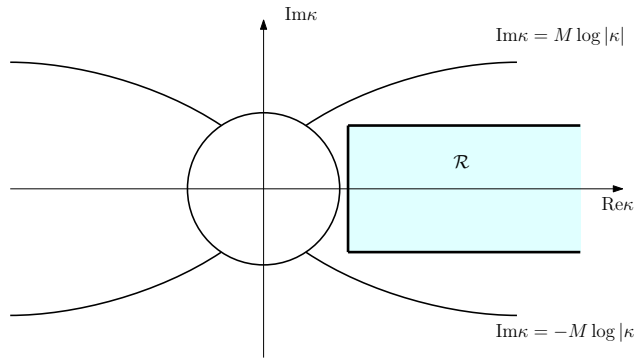}
\caption{The region $\mathcal{R}$.}
\label{dR}
\end{figure}

\begin{lemma}\label{ac_1}
Let $f$ be a real-valued function and $\|f\|_{L^2(\Omega)}\leq Q$.
Then there exist positive constants $d$ and $A, A_1$ satisfying
$C_0< A<A_1$, which do not depend on $f$ and $Q$, such that 
\[
\kappa^2\|u(x,\kappa)\|^2_{L^2(\Gamma)} \lesssim
Q^2e^{c\kappa}\epsilon_1^{2\mu(\kappa)}, \quad \forall\,
\kappa\in (A_1, +\infty),
\]
where $C_0$ is specified in Theorem \ref{bound_2}, $c$ is any
positive constant, and
\[
\epsilon^2_1 :={\rm sup}_{\kappa \in (A,A_1)} \kappa^2\|u\|^2_{L^2(\Gamma)},\quad 
\mu(\kappa) \geq \frac{64ad}{3\pi^2(a^2 + 4d^2)} e^{\frac{\pi}{2d}(\frac{a}{2} -
\kappa)}.
\]
Here $a = A_1 - A.$
\end{lemma}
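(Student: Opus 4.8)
The plan is to realize the data functional $\kappa^2\|u(\cdot,\kappa)\|_{L^2(\Gamma)}^2$ as the boundary values of a single holomorphic function and then to run a two-constants (harmonic measure) argument. Since $V$ and $f$ are real, the map $w(x,z):=\overline{u(x,\bar z)}$ is holomorphic in $z$ wherever $u(x,\cdot)=R_V(\cdot)f$ is, and it equals $\overline{u(x,\kappa)}$ on the real axis; I would therefore introduce
\[
I(z):=z^2\int_\Gamma u(x,z)\,\overline{u(x,\bar z)}\,{\rm d}s(x),
\]
which is holomorphic in $z$ off the resonance set and satisfies $I(\kappa)=\kappa^2\|u(\cdot,\kappa)\|_{L^2(\Gamma)}^2$ for real $\kappa$. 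The first step is to fix a half-strip
\[
\mathcal R:=\{z=\kappa_1+{\rm i}\kappa_2:\ \kappa_1>A,\ |\kappa_2|<d\}
\]
with $d$ small and $A>C_0$ large enough that $\mathcal R\subset\Omega_M$. This is possible precisely because the lower boundary curve $\Im\lambda=-M\log|\lambda|$ of $\Omega_M$ tends to $-\infty$, so the bottom edge $\kappa_2=-d$ lies in $\Omega_M$ once $\kappa_1\ge e^{d/M}$; taking $A\ge e^{d/M}$ (and $A>C_0$) makes $R_V(z)$ holomorphic on $\mathcal R$ by Theorem~\ref{meromorphic}, with \eqref{bound_3} available there. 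These choices of $d,A,A_1$ depend only on $T,M,C_0$, not on $f$ or $Q$, as required.

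The second step is the growth bound. Writing $u(\cdot,z)=R_V(z)(\rho f)$ with $\rho\equiv1$ on a neighborhood of $\mathrm{supp}\,f\cup\Gamma$, and combining the continuity of the trace $H^2(\Omega)\to L^2(\Gamma)$ with \eqref{bound_3} at $j=2$, I obtain
\[
\|u(\cdot,z)\|_{L^2(\Gamma)}\lesssim |z|^{-1/2}(1+|z|^2)\,e^{2T(\Im z)_-}\,\|f\|_{L^2(\Omega)}.
\]
Applying this to both factors of $I(z)$ and using $(\Im z)_-+(\Im\bar z)_-=|\Im z|\le d$ on $\mathcal R$, the exponential is bounded by the constant $e^{2Td}$ and only polynomial growth survives; absorbing the polynomial into an arbitrarily small exponential gives $|I(z)|\lesssim Q^2 e^{c\kappa_1}$ on $\mathcal R$ for any preassigned $c>0$. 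On the base segment, $|I(\kappa)|\le\epsilon_1^2$ for $\kappa\in(A,A_1)$ holds by the very definition of $\epsilon_1$.

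The third, decisive step converts these two bounds into the pointwise estimate on $(A_1,+\infty)$. I would factor out the growth by considering $\widetilde I(z):=Q^{-2}e^{-cz}I(z)$, which obeys $|\widetilde I|\lesssim 1$ on $\mathcal R$ and $|\widetilde I|\lesssim\epsilon_1^2/Q^2$ on $(A,A_1)$, and then apply the maximum principle to $\log|\widetilde I|$ against the harmonic measure $\mu(z)=\omega\big(z;(A,A_1),\mathcal R\big)$ of the segment $(A,A_1)$ in $\mathcal R$ (treated as a slit, so that $\kappa\in(A_1,\infty)$ is an interior point). The two-constants theorem then yields $|\widetilde I(\kappa)|\lesssim\big(\epsilon_1^2/Q^2\big)^{\mu(\kappa)}$, hence
\[
\kappa^2\|u(\cdot,\kappa)\|_{L^2(\Gamma)}^2=|I(\kappa)|\ \lesssim\ Q^2 e^{c\kappa}\,\epsilon_1^{2\mu(\kappa)},\qquad \kappa\in(A_1,+\infty),
\]
which is the claimed inequality (using $Q^{2(1-\mu)}\le Q^2$).

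The hard part, and the origin of the explicit constants, is the lower bound for $\mu(\kappa)$. After shifting $A$ to the origin so the slit is $(0,a)$ centered at $a/2$, I would bound the harmonic measure from below by comparison with the principal transverse Dirichlet mode of the strip of half-width $d$, namely a multiple of $\cos\!\big(\tfrac{\pi\kappa_2}{2d}\big)\,e^{-\frac{\pi}{2d}(\kappa_1-a/2)}$, which is harmonic, vanishes on $\kappa_2=\pm d$, and decays in $\kappa_1$ at exactly the rate $\tfrac{\pi}{2d}$ dictated by the strip width; equivalently one may straighten the geometry by the conformal map $z\mapsto e^{\frac{\pi}{2d}(z-A)}$ and read off the leading mode. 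Projecting the slit data onto this mode produces the prefactor $\tfrac{64ad}{3\pi^2(a^2+4d^2)}$ and the exponent $\tfrac{\pi}{2d}(\tfrac a2-\kappa)$, giving the stated lower bound. This harmonic-measure estimate, analogous to the analytic-continuation lemmas of \cite{LYZ,LZZ}, rather than the functional-analytic input from Theorem~\ref{bound_2}, is where the genuine work lies.
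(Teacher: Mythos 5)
Your argument reproduces the paper's proof almost step for step: you construct a function holomorphic on a half-strip $\mathcal{R}=(A,+\infty)\times(-d,d)$ contained in the resonance-free region of Theorem~\ref{bound_2}, bound it there by $Q^2e^{c\kappa}$ using \eqref{bound_3} plus a trace inequality, bound it by $\epsilon_1^2$ on $(A,A_1)$, and finish with a two-constants/harmonic-measure argument. The only structural difference in these steps is cosmetic, and in fact slightly cleaner than the paper: the paper continues $\overline{u(x,\kappa)}$ by $u(x,-\kappa)$, which uses the reality of $f$ and $V$ and forces the origin-symmetric region $\widetilde{\Omega}_M$, whereas your Schwarz reflection $\overline{u(x,\bar z)}$ is holomorphic with no reality assumption and only needs $\mathcal{R}$ to be symmetric about the real axis; the two continuations agree where both are defined. (Two immaterial points: your step $Q^{2(1-\mu)}\le Q^2$ needs $Q\ge 1$, and the paper uses $j=1$ in \eqref{bound_3} with the $H^1(\Omega)\to H^{1/2}(\Gamma)$ trace rather than $j=2$.)

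The genuine divergence is the final step, and there your proposal has a gap. The paper does not prove the harmonic-measure lower bound at all: it invokes Lemma~\ref{ac} of the appendix, whose proof is cited to \cite[Lemma A.1]{LZZ}. You instead sketch a derivation, and the sketch as written fails. The harmonic measure $\mu(z)$ of the slit $(A,A_1)$ in $\mathcal{R}\setminus(A,A_1]$ has boundary value $0$ on all of $\partial\mathcal{R}$, including the left edge $\{\Re z=A,\ |\Im z|<d\}$, while your proposed minorant $c\cos\bigl(\tfrac{\pi\kappa_2}{2d}\bigr)e^{-\frac{\pi}{2d}(\kappa_1-a/2)}$ is strictly positive on that edge; hence it does not lie below $\mu$ on the boundary of the slit domain and the maximum-principle comparison cannot be run there. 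The standard repair is a two-step localization: first obtain a positive lower bound for $\mu$ on the cross-section $\{\Re z=A_1,\ |\Im z|<d\}$ (by a Harnack chain from the slit, or by explicit Poisson-kernel estimates after conformally mapping the slit domain), and only then compare with the decaying transverse mode in the right half-strip $\{\Re z>A_1\}$, whose boundary consists of that cross-section and the two horizontal edges where the mode vanishes. It is this more careful argument that produces the explicit constant $\frac{64ad}{3\pi^2(a^2+4d^2)}$. Citing Lemma~\ref{ac} (as the paper does) closes the gap; as it stands, your last step is a plausible but invalid comparison.
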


\begin{proof}
Let 
\[
I(\kappa):=\kappa^2\int_{\Gamma}u(x,\kappa)u(x,-\kappa){\rm d}s,\quad \kappa\in \mathbb{C}.
\]
Since $f(x)$ is a real-valued function,  we have
$\overline{u(x,\kappa)}=u(x,-\kappa)$ for $\kappa\in\mathbb{R}$, which gives
\[
I(\kappa)=\kappa^2\|u(x,\kappa)\|^2_{L^2(\Gamma)},\quad \kappa\in\mathbb{R}.
\]
It follows from Theorem \ref{bound_2} that $I(\kappa)$ is analytic in the domain
\[
\widetilde{\Omega}_M=\{\kappa\in\mathbb{C}:\,-M\log|\kappa|\leq
\mathrm{Im}\,\kappa\leq M\log|\kappa|,\, |\kappa|\geq C_0\},
\]
which is symmetric with respect to the origin. 
Hence, there exists $d>0$ such
that $\mathcal{R} = (A, +\infty)\times (-d, d)\subset\widetilde{\Omega}_M$. The
geometry of domain $\mathcal R$ is shown in Figure \ref{dR}. By
\eqref{bound_3} we have for $\kappa \in \mathcal{R}$ that 
\begin{align*}
|\kappa| \|u(x,\pm\kappa)\|_{L^2(\Gamma)}
& \lesssim|\kappa| \|u(x,\pm\kappa)\|_{H^{1/2}(\Gamma)}\lesssim
|\kappa| \|u(x,\pm\kappa)\|_{H^1(\Omega)}\\
&\lesssim|\kappa |e^{2T({\rm Im}(\pm\kappa))_-}\|f\|_{L^2(\Omega)}
\lesssim |\kappa |e^{2Td}\|f\|_{L^2(\Omega)},
\end{align*}
which shows that 
\[
|\kappa|\|u(x,\pm\kappa)\|_{L^2(\Gamma)} \lesssim|\kappa|\|f\|_{L^2(\Omega)}, \quad\kappa\in \mathcal{R}.
\]
Since 
\[
|I(\kappa)|\leq |\kappa| \|u(x,\kappa)\|_{L^2(\Gamma)} |\kappa|
\|u(x,-\kappa)\|_{L^2(\Gamma)} \lesssim
|\kappa|^2\|f\|_{L^2(\Omega)}^2,\quad\kappa\in \mathcal{R},
\]
we have 
\[
|e^{-c\kappa}I(\kappa)|\lesssim Q^2,\quad\kappa\in \mathcal{R}
\]
for any positive constant $c$. An application of Lemma \ref{ac} leads to 
\[
\big| e^{-c\kappa} I(\kappa)\big| \lesssim
Q^2\epsilon_1^{2\mu(\kappa)},\quad \forall\, \kappa\in (A_1, +\infty),
\]
where 
\[
\mu(\kappa) \geq \frac{64ad}{3\pi^2(a^2 + 4d^2)} e^{\frac{\pi}{2d}(\frac{a}{2} - \kappa)},
\]
which completes the proof. 
\end{proof}

Here we state a simple uniqueness result for the inverse problem. 

\begin{theorem}
Let $f\in L^2(\Omega)$ and $I:=(C_0, C_0 + \delta) \subset \mathbb R^+$ be an
open interval, where $C_0$ is the constant given  in the definition of
$\widetilde{\Omega}_M$ in Lemma \ref{ac_1} and $\delta$ is any positive
constant. Then the source term $f$ can be uniquely determined by the
multi-frequency data $\{u(x,\kappa): x\in \Gamma, \kappa \in I\}\cup
\{u(x,\kappa_j): x\in\Gamma, \kappa_j\in (0, C_0]\}$.
\end{theorem}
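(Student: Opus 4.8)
The plan is to exploit linearity together with the spectral decomposition of $f$, reducing the uniqueness to a statement about the boundary traces $u(\cdot,\kappa)|_\Gamma$, and then to propagate the vanishing of the data from the small interval $I$ to the whole ray $(C_0,+\infty)$ by analytic continuation in the resonance-free region. Since the forward map $f\mapsto u(\cdot,\kappa)|_\Gamma$ is linear, it suffices to suppose that $f$ produces identically vanishing data, i.e. $u(x,\kappa)|_\Gamma=0$ for all $\kappa\in I$ and $u(x,\kappa_j)|_\Gamma=0$ for all $\kappa_j\in(0,C_0]$, and to deduce $f\equiv 0$. Writing $f=\sum_{j=1}^\infty f_j\phi_j$ with $f_j=\int_\Omega f\bar\phi_j\,{\rm d}x$ and recalling $\|f\|_{L^2(\Omega)}^2=\sum_j|f_j|^2$, the goal becomes to show $f_j=0$ for every $j$.

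By the estimate $|f_j|^2\lesssim\kappa_j^2\|u(\cdot,\kappa_j)\|_{L^2(\Gamma)}^2$ established just above, where $\kappa_j$ is the frequency associated with the $j$-th Dirichlet eigenvalue $\mu_j$ (so that $\kappa_j\to\infty$ as $j\to\infty$ and, by the standing assumption $\mu_j\notin\{\alpha_n\}$, each $\kappa_j$ avoids the waveguide resonances), it is enough to prove $u(\cdot,\kappa_j)|_\Gamma=0$ for all $j$. Because $\kappa_j\to\infty$, only finitely many indices satisfy $\kappa_j\in(0,C_0]$; for those the vanishing $u(\cdot,\kappa_j)|_\Gamma=0$ is part of the hypothesis, since the discrete data $\{u(x,\kappa_j):\kappa_j\in(0,C_0]\}$ is precisely the data assumed to vanish. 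It therefore remains to treat the indices with $\kappa_j>C_0$.

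For these, I would invoke the continuation supplied by Theorems \ref{meromorphic} and \ref{bound_2}. Choosing a cutoff $\rho\in C_{0,\,x'}^\infty(\Omega)$ with $\rho f=f$ and $\rho V=V$, we may write $u(\cdot,\kappa)=\rho R_V(\kappa)\rho f$, and the resolvent bound \eqref{bound_3} shows that $\rho R_V(\kappa)\rho$ is analytic, not merely meromorphic, throughout the resonance-free region $\Omega_M$; composing with the bounded trace map $H_N^2(\Omega)\to L^2(\Gamma)$ makes $\kappa\mapsto u(\cdot,\kappa)|_\Gamma$ an $L^2(\Gamma)$-valued analytic function on $\Omega_M$. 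The region $\Omega_M$ is connected and contains the entire ray $(C_0,+\infty)$, while the open interval $I\subset(C_0,+\infty)$ consists of interior points of $\Omega_M$ (they satisfy $|\lambda|>C_0$ and lie above the curve $\Im\lambda=-M\log|\lambda|$) and hence has limit points in the interior. The identity theorem for analytic functions then forces $u(\cdot,\kappa)|_\Gamma\equiv0$ on the connected component of $\Omega_M$ containing $I$, and in particular at every real $\kappa>C_0$; since each $\kappa_j>C_0$ lies on this ray, we obtain $u(\cdot,\kappa_j)|_\Gamma=0$ for these indices too.

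Combining the two cases gives $u(\cdot,\kappa_j)|_\Gamma=0$ for all $j$, whence $f_j=0$ for all $j$ and $f\equiv0$, which is the claimed uniqueness. The main obstacle is the continuation step: one must be sure that the trace of the scattered field is genuinely analytic on a connected complex region simultaneously containing $I$ and the ray $(C_0,+\infty)$, so that vanishing on $I$ can be transported to all $\kappa_j>C_0$ without interruption. This is exactly where the resonance-free region $\Omega_M$ and the pole-free bound \eqref{bound_3} of Theorem \ref{bound_2} are indispensable, since the condition $C_0>\alpha_N$ removes the finitely many waveguide resonances from $(C_0,+\infty)$ and guarantees that $R_V$ has no poles along this ray.
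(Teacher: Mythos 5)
Your proposal is correct and follows essentially the same route as the paper: reduce by linearity to vanishing data, use analyticity of $\kappa\mapsto u(\cdot,\kappa)|_\Gamma$ in the resonance-free region (Theorem \ref{bound_2}) to propagate the vanishing from $I$ to the whole ray $(C_0,+\infty)$ and hence to every $\kappa_j$, then apply the bound $|f_j|^2\lesssim\kappa_j^2\|u(\cdot,\kappa_j)\|^2_{L^2(\Gamma)}$ to conclude $f_j=0$ for all $j$. Your write-up is in fact somewhat more careful than the paper's, as it makes explicit both the source of the analyticity (the pole-free resolvent bound composed with the trace map) and the role of the eigenvalue-frequency lemma, which the paper invokes only implicitly via \eqref{integral_equation}.
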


\begin{proof}
Let $u(x,\kappa)=0$ for $x\in\Gamma$ and $\kappa \in I \cup \{\kappa_j:
\kappa_j\in (0, C_0])\}$. It suffices to show that $f(x)=0$. Since $u(x,\kappa)$
is analytic in $\widetilde{\Omega}_M$ for $x\in\Gamma$, it holds that
$u(x,\kappa) = 0$ for all eigenvalues $\kappa>C_0$. Then we have that
$u(x,\kappa_j)=0$ for all $\kappa_j, j=1, 2, 3, \cdot\cdot\cdot$.
It follows from \eqref{integral_equation} that 
\[
\int_{\Omega} f(x)\bar{\phi}_j(x) {\rm d}x = 0, \quad j=1, 2, 3, \cdot\cdot\cdot,
\]
which implies $f = 0$.
\end{proof}

The following lemma is important in the stability analysis (cf. \cite[Lemma 3.4]{LZZ}).

\begin{lemma}\label{tail}
Let $f\in H^{n+1}(\Omega)$ and $\|f\|_{H^{n+1}(\Omega)}\leq Q$. It holds that 
\[
\sum_{j \geq s} |f_j|^2 \lesssim \frac{Q^2}{s^{\frac{2}{3}(n+1)}}.
\]
\end{lemma}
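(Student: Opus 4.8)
The plan is to estimate the high-frequency tail $\sum_{j\geq s}|f_j|^2$ by converting a decay in $j$ into a decay in the eigenvalue $\mu_j$, using the regularity assumption $f\in H^{n+1}(\Omega)$ to control the Fourier-like coefficients $f_j$ against powers of $\mu_j$, and then invoking Weyl's asymptotic law to relate $\mu_j$ back to the index $j$. The key observation is that the coefficients $f_j=\int_\Omega f\bar\phi_j$ can be integrated by parts against the eigenvalue equation $(-\Delta+V)\phi_j=\mu_j\phi_j$, so that repeated application of the elliptic operator transfers derivatives from $\phi_j$ onto $f$ at the cost of a factor $\mu_j$ each time.

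First I would show that $\mu_j^{k}\,f_j=\int_\Omega\big((-\Delta+V)^k f\big)\bar\phi_j\,{\rm d}x$ for an appropriate integer power $k$, using the self-adjointness of $-\Delta+V$ with the Dirichlet boundary condition and the fact that $\phi_j$ vanishes on $\partial\Omega$ (here one needs $f$ and its relevant derivatives to satisfy compatible boundary conditions, or one argues with the weak form to avoid boundary terms). Since $\{\phi_j\}$ is orthonormal, Bessel's inequality then gives
\[
\sum_{j}\mu_j^{2k}|f_j|^2=\big\|(-\Delta+V)^k f\big\|_{L^2(\Omega)}^2\lesssim \|f\|_{H^{2k}(\Omega)}^2\lesssim Q^2,
\]
so the weighted sum is bounded by $Q^2$. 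Taking $k=(n+1)/2$ matches the regularity $H^{n+1}$; the half-integer case is handled by interpolation between consecutive even orders, which is the standard way to reach a general Sobolev exponent $n+1$.

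Next I would extract the tail bound. For $j\geq s$ one has $\mu_j\geq\mu_s$, hence
\[
\sum_{j\geq s}|f_j|^2=\sum_{j\geq s}\mu_j^{-(n+1)}\,\mu_j^{n+1}|f_j|^2
\leq \mu_s^{-(n+1)}\sum_{j\geq s}\mu_j^{n+1}|f_j|^2\lesssim \frac{Q^2}{\mu_s^{n+1}}.
\]
Finally I would apply Weyl's law for the Dirichlet eigenvalues of $-\Delta+V$ on the three-dimensional bounded domain $\Omega$, namely $\mu_j\sim c\,j^{2/3}$ as $j\to\infty$, so that $\mu_s^{n+1}\gtrsim s^{\frac{2}{3}(n+1)}$. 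Substituting this gives the claimed estimate $\sum_{j\geq s}|f_j|^2\lesssim Q^2/s^{\frac{2}{3}(n+1)}$.

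\emph{The main obstacle} I anticipate is the integration-by-parts step: applying $(-\Delta+V)^k$ and moving it onto $\phi_j$ produces boundary terms on $\partial\Omega$ unless $f$ and its lower-order normal derivatives vanish there. A clean resolution is to work entirely within the variational/spectral formulation and use the fact that $\{\phi_j\}$ forms an orthonormal basis of $L^2(\Omega)$ with $\|(-\Delta+V)^{k}f\|_{L^2}^2=\sum_j\mu_j^{2k}|f_j|^2$ valid whenever $f$ lies in the domain of the $k$-th power of the operator; since the quoted result is Lemma 3.4 of \cite{LZZ}, the precise boundary hypotheses are inherited from there, and the exponent $\tfrac{2}{3}(n+1)$ in three dimensions is exactly what Weyl's law produces. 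The only genuinely three-dimensional input is the $j^{2/3}$ growth of the eigenvalues, which is where the $\tfrac{2}{3}$ in the exponent originates.
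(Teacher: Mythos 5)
The paper itself gives no proof of this lemma --- it is imported wholesale from \cite[Lemma 3.4]{LZZ} --- so your argument is being measured against the intended (cited) proof rather than a written one, and your skeleton is exactly that standard argument: weighted Bessel inequality, monotonicity of the eigenvalues, and the Weyl bound. Note that the paper's own Lemma \ref{eigenfunction_est1} supplies precisely the two-sided inequality $E_1 j^{2/3}\leq \mu_j\leq E_2 j^{2/3}$ you need (you should quote that, rather than an asymptotic $\mu_j\sim c j^{2/3}$, since the estimate is used for every $s$, not just large $s$). Your flag of the boundary-term issue is also correct and is not cosmetic: with only $f\in H^{n+1}(\Omega)$ the statement is \emph{false} (take $f\equiv 1$; its coefficients against Dirichlet eigenfunctions produce a tail decaying like a fixed negative power of $s$ independent of $n$), so the support condition $\text{supp}\,f\subset\Omega$ from the class $\mathcal{C}_Q$, where the lemma is actually applied, must be invoked to kill the boundary terms in the iterated integration by parts.

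There is, however, one genuine gap you did not identify: the step
\[
\sum_j \mu_j^{2k}|f_j|^2=\big\|(-\Delta+V)^k f\big\|_{L^2(\Omega)}^2\lesssim \|f\|_{H^{2k}(\Omega)}^2
\]
silently differentiates $V$, while this paper assumes only $V\in L^\infty$ (real-valued, compactly supported in Section 3). For $k=1$ there is no problem, but already $(-\Delta+V)^2 f$ contains the term $\Delta(Vf)$, which for rough $V$ and $f\in C_0^\infty(\Omega)$ is only a distribution; in operator terms, $f\in D\big((-\Delta_D+V)^2\big)$ would require $(-\Delta+V)f=-\Delta f+Vf$ to lie in $H^2(\Omega)\cap H^1_0(\Omega)$, which fails when $Vf\notin H^1$, and then $\sum_j\mu_j^4|f_j|^2$ genuinely diverges --- no constant makes your display true. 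This is not a removable technicality of the proof: since a tail bound $\sum_{j\geq s}|f_j|^2\lesssim s^{-\frac{2}{3}(n+1)}$ together with $\mu_j\lesssim j^{2/3}$ forces (by summation by parts) convergence of $\sum_j\mu_j^{\beta}|f_j|^2$ for every $\beta<n+1$, the lemma itself requires additional smoothness of $V$ (say $V\in W^{n-1,\infty}$, which is what legitimizes your commutation) once $n+1\geq 4$. So you should either add that hypothesis explicitly or restrict the claim to low regularity orders; the paper inherits the same issue by citing \cite{LZZ}, where the eigenvalue problem involves a different operator, without adaptation. A smaller point: your ``interpolation between consecutive even orders'' must be performed on the maps $f\mapsto(f_j)_j$ between Sobolev spaces and weighted $\ell^2$ spaces; a naive H\"{o}lder interpolation of the weighted sums would require control of $\sum_j\mu_j^{n+2}|f_j|^2$, i.e.\ more regularity than $H^{n+1}$.
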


Define a real-valued function space
\[
\mathcal C_Q = \{f \in H^{n+1}(\Omega):  \|f\|_{H^{n+1}(\Omega)}\leq Q, ~ \text{supp}
f\subset \Omega, ~ f: \Omega \rightarrow \mathbb R \}.
\]
Now we are in the position to discuss the inverse source problem. Let $f\in
\mathcal C_Q$. The inverse source problem is to determine $f$ from the limited aperture boundary
data $u(x,\kappa)$, $x\in\Gamma$, $\kappa \in (A, A_1) \cup \cup_{j=1}^{N_1}
\kappa_j$, where $1\leq N_1 \in \mathbb N$ and $\kappa_{N_1}>A_1$. Here $A$ and $A_1$
are the constants specified in Lemma \ref{ac_1}.

The following stability estimate is the main result of this paper.

\begin{theorem}
Let $u(x,\kappa)$ be the solution of the scattering problem corresponding to the source $f\in \mathcal C_Q$.
Then for sufficiently small $\epsilon_1$, the following estimate holds: 
\begin{align}\label{stability}
\|f\|_{L^2(\Omega)}^2  \lesssim \epsilon^2 +
\frac{Q^2}{N_1^{\frac{1}{3}(n+1)}(\ln|\ln\epsilon_1|)^{\frac{1}{3}(n+1)}},
\end{align}
where
\begin{align*}
\epsilon^2 = \sum_{j=1}^{N_1} \kappa_j^2 \|u(x,\kappa_j)\|^2_{L^2(\Gamma)},\quad
\epsilon^2_1 = {\rm sup}_{\kappa \in (A,A_1)} \kappa^2 \|u(x,\kappa)\|^2_{L^2(\Gamma)}.
\end{align*}
\end{theorem}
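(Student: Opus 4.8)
The plan is to bound $\|f\|_{L^2(\Omega)}^2=\sum_{j\ge 1}|f_j|^2$ by splitting the spectral sum into three bands and treating each with a different one of the preparatory lemmas: the low modes are pinned down by the directly measured data, an intermediate band is recovered from the interval $(A,A_1)$ by analytic continuation, and the remaining tail is absorbed by the Sobolev bound. For a cutoff index $s\ge N_1$ to be fixed at the end, I would write
\[
\|f\|_{L^2(\Omega)}^2=\sum_{j=1}^{N_1}|f_j|^2+\sum_{N_1<j\le s}|f_j|^2+\sum_{j>s}|f_j|^2=:\mathrm{I}+\mathrm{II}+\mathrm{III}.
\]

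For the low band $\mathrm{I}$ I would use the estimate $|f_j|^2\lesssim\kappa_j^2\|u(x,\kappa_j)\|_{L^2(\Gamma)}^2$ directly at the measured frequencies $\kappa_1,\dots,\kappa_{N_1}$ (all of which carry data), so that $\mathrm{I}\lesssim\sum_{j=1}^{N_1}\kappa_j^2\|u(x,\kappa_j)\|_{L^2(\Gamma)}^2=\epsilon^2$; this is the Lipschitz data-discrepancy term. For the tail $\mathrm{III}$, Lemma \ref{tail} gives $\mathrm{III}\lesssim Q^2 s^{-\frac23(n+1)}$, and the target second term in \eqref{stability} fixes the cutoff: taking $s\sim\big(N_1\ln|\ln\epsilon_1|\big)^{1/2}$ turns this into $Q^2\big(N_1\ln|\ln\epsilon_1|\big)^{-\frac13(n+1)}$. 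The admissibility $s\ge N_1$ is equivalent to $\ln|\ln\epsilon_1|\gtrsim N_1$, which is precisely what the smallness hypothesis on $\epsilon_1$ provides.

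The heart of the argument is the intermediate band $\mathrm{II}$. Since $\kappa_j>\kappa_{N_1}>A_1$ there, the same pointwise estimate followed by Lemma \ref{ac_1} gives $|f_j|^2\lesssim Q^2 e^{c\kappa_j}\epsilon_1^{2\mu(\kappa_j)}=Q^2\exp\big(c\kappa_j-2\mu(\kappa_j)|\ln\epsilon_1|\big)$. The exponent is increasing in $\kappa_j$, so $\mathrm{II}$ is controlled by $s$ copies of its top term at $j=s$. Using $\mu(\kappa)\gtrsim e^{-\frac{\pi}{2d}\kappa}$ from Lemma \ref{ac_1} together with the growth of the Dirichlet eigenvalues (Weyl's law gives $\kappa_s=\sqrt{\mu_s}\sim s^{1/3}\sim(N_1\ln|\ln\epsilon_1|)^{1/6}$), one has $\kappa_s\ll\ln|\ln\epsilon_1|$, whence $e^{-\frac{\pi}{2d}\kappa_s}|\ln\epsilon_1|\to\infty$ while $c\kappa_s$ grows only like a power of $\ln|\ln\epsilon_1|$. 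Thus the top term decays super-exponentially in $|\ln\epsilon_1|$ and $\mathrm{II}$ is dominated by $\mathrm{III}$.

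The main obstacle is exactly this estimate of $\mathrm{II}$: the continuation bound deteriorates like $\epsilon_1^{2\mu(\kappa)}$ with $\mu(\kappa)$ itself exponentially small in $\kappa$, so the interval data propagate only up to frequencies of order $\ln|\ln\epsilon_1|$. This double-exponential loss produces the double logarithm in \eqref{stability} and is what prevents enlarging $s$ further; checking the constants and pinning down the threshold on $\epsilon_1$ (so that simultaneously $N_1<s$ and the whole band remains within the effective range of the continuation) is the delicate point. Collecting $\mathrm{I}\lesssim\epsilon^2$ and $\mathrm{II}+\mathrm{III}\lesssim Q^2\big(N_1\ln|\ln\epsilon_1|\big)^{-\frac13(n+1)}$ then yields \eqref{stability}.
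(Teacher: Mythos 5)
Your three-band decomposition is, in substance, the same strategy as the paper's own proof: the low modes are controlled by the measured data through $|f_j|^2\lesssim \kappa_j^2\|u(\cdot,\kappa_j)\|^2_{L^2(\Gamma)}$, the intermediate band by the analytic continuation of Lemma \ref{ac_1}, and the tail by Lemma \ref{tail}. Your treatment of the intermediate band is sound: the exponent is monotone, $\kappa_s\sim s^{1/3}\lesssim(\ln|\ln\epsilon_1|)^{1/3}\ll\ln|\ln\epsilon_1|$ in your regime, so each term is super-exponentially small in $|\ln\epsilon_1|$ and the band is dominated by the tail; this even avoids the paper's auxiliary inequality $e^{-x}\leq (2n+3)!/x^{2n+3}$. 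The only structural difference is the cutoff: you take the geometric mean $s\sim(N_1\ln|\ln\epsilon_1|)^{1/2}$, whereas the paper takes $L\sim\max\{N_1,\tfrac{1}{2c_1}\ln|\ln\epsilon_1|\}$ and recovers the geometric-mean form of the second term in \eqref{stability} at the very end.

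There is, however, a genuine gap: your claim that the admissibility condition $s\geq N_1$, i.e.\ $\ln|\ln\epsilon_1|\gtrsim N_1$, ``is precisely what the smallness hypothesis on $\epsilon_1$ provides'' is a misreading. In the theorem, $N_1$ is a free parameter of the measurement configuration and $\epsilon_1$ is determined by the data, not chosen; the threshold behind ``sufficiently small $\epsilon_1$'' must be independent of $N_1$ (in the paper's proof it is universal, depending only on $a$, $d$, and the geometry). Demanding $\epsilon_1\leq\exp(-\exp(cN_1))$ excludes exactly the interesting regime of the theorem, namely increasing stability as $N_1$ grows for a fixed level of data accuracy: with your restriction, for fixed $\epsilon_1$ the bound is only proved for $N_1\lesssim\ln|\ln\epsilon_1|$. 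The repair is the paper's second case, which your write-up omits: if $N_1> c\,\ln|\ln\epsilon_1|$, take $s=N_1$, so the intermediate band is empty and Lemma \ref{tail} alone gives
\[
\sum_{j>N_1}|f_j|^2\lesssim \frac{Q^2}{N_1^{\frac{2}{3}(n+1)}}\leq \frac{Q^2}{N_1^{\frac{1}{3}(n+1)}\big(c\ln|\ln\epsilon_1|\big)^{\frac{1}{3}(n+1)}},
\]
where the last step uses $N_1\geq c\ln|\ln\epsilon_1|$. With this case added, your argument becomes a complete proof of \eqref{stability}.
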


\begin{proof}
We can assume that $\epsilon_1 \leq e^{-1}$, otherwise the estimate is obvious.

First, we link the data $\kappa^2 \|u(x,\kappa)\|^2_{L^2(\Gamma)}$
for large wavenumber $\kappa$ satisfying $\kappa\leq L$ with the given data
$\epsilon_1$ of small wavenumber by using the analytic continuation in Lemma
\ref{ac_1}, where $L$ is some large positive integer to be determined later.  By
Lemma \ref{ac_1}, we obtain 
\begin{align*}
&\kappa^2\|u(x,\kappa)\|^2_{L^2(\Gamma)}\lesssim Q^2e^{c|\kappa|} \epsilon_1^{\mu(\kappa)}\\
&\lesssim Q^2{\rm exp}\{c\kappa - \frac{c_2a}{a^2 + c_3}e^{c_1(\frac{a}{2} - \kappa)}
|{\ln}\epsilon_1|\}\\
&\lesssim  Q^2{\rm exp} \{  - \frac{c_2a}{a^2 + c_3}e^{c_1(\frac{a}{2} - \kappa)}|{\ln}\epsilon_1| (1 -  \frac{c_4\kappa(a^2 + c_3)}{a} e^{c_1(\kappa - \frac{a}{2})}|{\ln}\epsilon_1|^{-1})\}\\
&\lesssim Q^2{\rm exp} \{  - \frac{c_2a}{a^2 + c_3}e^{c_1(\frac{a}{2} - L)}|{\ln}\epsilon_1| (1 -  \frac{c_4L(a^2 + c_3)}{a} e^{c_1(L - \frac{a}{2})}|{\ln}\epsilon_1|^{-1})\}\\
&\lesssim Q^2{\rm exp} \{  - b_0e^{-c_1L}|{\ln}\epsilon_1| (1 - b_1L e^{c_1L }|{\ln}\epsilon_1|^{-1})\},
\end{align*}
where $c, c_i, i=1,2$ and $b_0, b_1$ are constants. Let
\begin{align*}
L = 
\begin{cases}
\left[\frac{1}{2c_1}\ln|\ln \epsilon_1|\right], &\quad N_1\leq \frac{1}{2c_1} \ln|\ln\epsilon_1|,\\
N_1, &\quad N_1>  \frac{1}{2c_1}\ln|\ln\epsilon_1|.
\end{cases}
\end{align*}

If $N_1\leq  \frac{1}{2c_1}\ln|\ln\epsilon_1|$, we obtain for $\epsilon_1$
sufficiently small that 
\begin{align*}
\kappa^2\|u(x,\kappa)\|^2_{L^2(\Gamma)}&\lesssim Q^2{\rm exp} \{  - b_0e^{-c_1L}|{\ln}\epsilon_1| (1 - b_1L e^{c_1L }|{\ln}\epsilon_1|^{-1})\}\\
& \lesssim Q^2\exp\{-\frac{1}{2}b_0e^{-c_1L}|\ln \epsilon_1|\}.
\end{align*}
Noting $e^{-x}\leq \frac{(2n+3)!}{x^{2n+3}}$ for $x>0$, we obtain
\begin{align*}
\sum_{j=N_1+1}^L\kappa_j^2\|u(x,\kappa_j)\|^2_{L^2(\Gamma)} \lesssim Q^2
Le^{(2n+3)c_1L}|\ln\epsilon_1|^{-(2n+3)}.
\end{align*}
Taking $L=\frac{1}{2c_1}\ln|\ln\epsilon_1|$, combining the above estimates
and  Lemma \ref{tail}, we get
\begin{align*}
&\|f\|_{L^2( \Omega)}^2\lesssim \sum_{j=1}^{N_1} |f_j|^2+\sum_{j=N_1+1}^L |f_j|^2 + \sum_{j=L+1}^{+\infty} |f_j|^2\\
&\lesssim \sum_{j=1}^{N_1} \kappa_j^2\|u(x,\kappa_j)\|^2_{L^2(\Gamma)}+\sum_{j=N_1+1}^L \kappa_j^2\|u(x,\kappa_j)\|^2_{L^2(\Gamma)}+ \frac{1}{L^{\frac{2}{3}(n+1)}}\|f\|^2_{H^{n+1}(\Omega)}\\
&\lesssim \epsilon^2 + LQ^2e^{(2n+3)c_1L}|\ln\epsilon_1|^{-(2n+3)}+ \frac{Q^2}{L^{\frac{2}{3}(n+1)}}\\
& \lesssim \epsilon^2 + Q^2\left((\ln|\ln\epsilon_1|)|\ln\epsilon_1|^{\frac{2n+3}{2}}|\ln\epsilon_1|^{-(2n+3)}+(\ln|\ln\epsilon_1|)^{-\frac{2}{3}(n+1)}\right)\\
& \lesssim \epsilon^2 + Q^2\left((\ln|\ln\epsilon_1|)|\ln\epsilon_1|^{-\frac{2n+3}{2}}+(\ln|\ln\epsilon_1|)^{-\frac{2}{3}(n+1)}\right)\\
& \lesssim \epsilon^2 + Q^2(\ln|\ln\epsilon_1|)^{-\frac{2}{3}(n+1)}\\
&\lesssim \epsilon^2 +
\frac{Q^2}{N_1^{\frac{1}{3}(n+1)}(\ln|\ln\epsilon_1|)^{\frac{1}{3}(n+1)}},
\end{align*}
where we have used $|\ln\epsilon_1|^{1/2}\geq \ln|\ln\epsilon_1|$ for
sufficiently small $\epsilon_1$.

If $N_1 >  \frac{1}{2c_1}\ln|\ln\epsilon_1|$, we have from  Lemma \ref{tail} that 
\begin{align*}
\|f\|_{L^2( \Omega)}^2 &\lesssim \sum_{j=1}^{N_1} |f_j|^2+ \sum_{j=N_1+1}^{+\infty} |f_j|^2\\
&\lesssim \epsilon^2 + \frac{Q^2}{N_1^{\frac{2}{3}(n+1)}}\\
&\lesssim \epsilon^2 +
\frac{Q^2}{N_1^{\frac{1}{3}(n+1)}(\ln|\ln\epsilon_1|)^{\frac{1}{3}(n+1)}},
\end{align*}
which completes the proof.
\end{proof}

\begin{remark}
The stability \eqref{stability} consists of two parts: the data discrepancy and
the high frequency tail. The former is of the Lipschitz type. The latter
decreases as $N_1$ increases which makes the problem have an almost Lipschitz
stability. The result reveals that the problem becomes more stable when higher
frequency data is used.
\end{remark}

\begin{remark}
Following the above arguments we can also prove an increasing stability for the inverse source problem in a two-dimensional waveguide. 
\end{remark}

\section{Conclusion}\label{con}

In this paper, we have studied the resonance-free region and given an upper bound for the resolvent of the Schr\"{o}dinger operator in a planar waveguide in three dimensions. As an application of the resolvent analysis, the direct source problem is shown to have a unique solution under some appropriate assumptions. Moreover, an increasing stability result is presented for the inverse source problem. The stability analysis requires the limited aperture Dirichlet data only at multiple frequencies. The estimate consists  of the data discrepancy and the high frequency tail of the source function. It shows that the ill-posedness of the inverse source problem decreases as the frequency increases for the data. We believe that the method can be directly applied to the planar waveguide with different boundary conditions, such as the Neumann boundary condition, and tubular waveguides. A related but more challenging problem is to study the inverse problem of determining the potential. The progress will be reported elsewhere in the future.  

\appendix

\section{Two useful lemmas}

The following lemma  gives a link between the values of an analytical function for small and large arguments.
The proof can be found in \cite[Lemma A.1]{LZZ}.
\begin{lemma}\label{ac}
Let $p(z)$ be analytic in the infinite rectangular slab
\[
R = \{z\in \mathbb C: (A, +\infty)\times (-d, d) \}, 
\]
where $A$ is a positive constant, and continuous in $\overline{R}$ satisfying
\begin{align*}
\begin{cases}
|p(z)|\leq \epsilon, &\quad z\in (A, A_1],\\
|p(z)|\leq M, &\quad z\in R,
\end{cases}
\end{align*}
where $A, A_1, \epsilon$ and $M$ are positive constants. Then there exists a function $\mu(z)$ with $z\in (A_1, +\infty)$ satisfying 
\begin{equation*}
\mu(z) \geq \frac{64ad}{3\pi^2(a^2 + 4d^2)} e^{\frac{\pi}{2d}(\frac{a}{2} - z)},
\end{equation*}
where $a = A_1 - A$, such that
\begin{align*}
|p(z)|\leq M\epsilon^{\mu(z)}\quad \forall\, z\in (A_1, +\infty).
\end{align*}
\end{lemma}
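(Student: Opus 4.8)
The plan is to reduce the statement to a lower bound for a harmonic measure, and then to estimate that harmonic measure by flattening the slab with an explicit conformal map. I may assume $M\ge 1$ and $\epsilon<1$: otherwise $M\epsilon^{\mu(z)}\ge M\ge|p(z)|$ and there is nothing to prove. For convenience I translate so that $A=0$, so that $\sigma:=(A,A_1]$ becomes $(0,a]$ with $a=A_1$ and midpoint $a/2$, which is exactly the shift appearing in the exponent; the displayed minorant for general $A>0$ then follows a fortiori, since moving the slab to the left only increases the relevant harmonic measure.

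First I would run a two-constants (Hadamard) argument. Let $\omega(z)$ be the harmonic measure of $\sigma$, regarded as an interior slit, in the slit slab $R\setminus\sigma$, i.e. the bounded harmonic function on $R\setminus\sigma$ equal to $1$ on the two sides of $\sigma$ and to $0$ on $\partial R$ and at infinity. The function $\log|p(z)|-\log M-\omega(z)\log\epsilon$ is subharmonic on $R\setminus\sigma$; it is $\le0$ on $\partial R$ (where $|p|\le M$ and $\omega=0$) and $\le0$ along $\sigma$ (where $|p|\le\epsilon$, $\omega=1$, and $\log M\ge0$), and it is bounded above, so Phragm\'en--Lindel\"of together with the maximum principle gives $|p(z)|\le M\epsilon^{\omega(z)}$ throughout $R\setminus\sigma$. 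Since $\epsilon<1$, it now suffices to prove the pointwise bound $\omega(z)\ge\frac{64ad}{3\pi^2(a^2+4d^2)}\,e^{\frac{\pi}{2d}(a/2-z)}$ for real $z>A_1$; taking $\mu(z)$ to be this explicit minorant then yields the Lemma.

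The heart of the matter is estimating $\omega$ on the real axis, and for this I would use a short chain of conformal maps to turn it into a subtended angle. The map $v=\sinh\!\big(\tfrac{\pi z}{2d}\big)$ carries $R=\{\Re z>0,\ |\Im z|<d\}$ conformally onto the right half-plane $\{\Re v>0\}$ and sends $\sigma=(0,a]$ to the real segment $(0,s_0]$ with $s_0=\sinh(\tfrac{\pi a}{2d})$, which still lies on the symmetry axis and so stays interior. Squaring, $\xi=v^2$, opens $\{\Re v>0\}$ to the slit plane $\mathbb{C}\setminus(-\infty,0]$ and merges the image of $\sigma$ with the cut, turning the domain into $\mathbb{C}\setminus(-\infty,s_0^2]$ in which $\omega$ now carries boundary value $1$ on the genuine boundary interval $[0,s_0^2]$ and $0$ on $(-\infty,0]$. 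A final square root $\eta=\sqrt{s_0^2-\xi}$ maps this onto the upper half-plane with piecewise-constant boundary data, so that $\omega$ equals $\tfrac1\pi$ times the angle subtended at the image of $z$ by the image of $[0,s_0^2]$. Tracking a real $z>A_1$ through the three maps sends it to the interior point $\eta=\mathrm{i}\sqrt{v^2-s_0^2}$ on the imaginary axis, and the subtended angle becomes an explicit arctangent of a quantity of size $\sim e^{-\pi z/2d}$.

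The main obstacle is the last, purely computational step of extracting the stated constant. Using elementary inequalities --- $\arctan t\ge t/(1+t^2)$ for $t>0$, together with standard bounds on $\sinh$ and the asymptotics of $e^{\pi z/2d}$ --- I would bound this angle below by $\frac{64ad}{3\pi^2(a^2+4d^2)}e^{\frac{\pi}{2d}(a/2-z)}$. The exponential rate $e^{-\pi z/2d}$ is forced by the slab width $2d$ (it is the decay rate of harmonic measure across a strip), while the prefactor $\tfrac{64ad}{3\pi^2(a^2+4d^2)}$ and the midpoint shift $a/2$ surface only after propagating the segment length $a$ through $\sinh$, the squaring, and the square root; matching this constant exactly, uniformly in $z>A_1$ rather than merely asymptotically, is the delicate part where essentially all of the effort lies. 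An alternative that sidesteps the exact angle is to exhibit by hand an explicit harmonic minorant of $\omega$ on the slit slab and verify the boundary inequalities directly, trading the conformal bookkeeping for a good ansatz but leading to the same family of elementary estimates.
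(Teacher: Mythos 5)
Your architecture is the standard one and matches the source of the paper's proof: the paper does not prove this lemma in-text but cites \cite[Lemma A.1]{LZZ}, and that proof is precisely your two-constants/harmonic-measure argument with an explicit conformal flattening of the slab. Your conformal bookkeeping is correct: after translating $A$ to $0$, the slit $(0,a]$ is attached to the left edge of the slab, so under $v=\sinh(\tfrac{\pi z}{2d})$ followed by $\xi=v^2$ its image does merge with the boundary cut, and a real $z>A_1$ lands at $\eta=\mathrm{i}\sqrt{v^2-s_0^2}$, giving exactly $\omega(z)=\tfrac{2}{\pi}\arctan\bigl(s_0/\sqrt{v^2-s_0^2}\bigr)$ with $s_0=\sinh(\tfrac{\pi a}{2d})$. (Your justification of the translation step should be translation invariance of harmonic measure together with $e^{\pi A/(2d)}\ge 1$ in the target exponent, not a monotonicity of the harmonic measure itself, but the a fortiori conclusion is right.)

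Two repairs are needed, one in each of the places you yourself flagged as delicate. First, the inequality you name, $\arctan t\ge t/(1+t^2)$, cannot close the argument: as $z\downarrow A_1$ one has $t\to\infty$, so $t/(1+t^2)\to 0$ while the required minorant tends to the positive limit $\tfrac{64ad}{3\pi^2(a^2+4d^2)}e^{-\pi a/(4d)}$; the bound therefore fails uniformly near $A_1$. Use instead $\arctan t\ge\sin(\arctan t)=t/\sqrt{1+t^2}$, which collapses the angle to $\omega(z)\ge\tfrac{2}{\pi}\,s_0/v\ge\tfrac{4}{\pi}\sinh(\tfrac{\pi a}{2d})\,e^{-\pi z/(2d)}$ (using $\sinh(\tfrac{\pi z}{2d})\le\tfrac12 e^{\pi z/(2d)}$); writing $s=\tfrac{\pi a}{2d}$, the stated constant then reduces to the elementary inequality $e^{s/2}-e^{-3s/2}\ge\tfrac{16s}{3(s^2+\pi^2)}$ for all $s>0$, which holds with room to spare (for $s\le 3/5$ use $e^{s/2}-e^{-3s/2}\ge 2s\,e^{-3s/2}$ and $\tfrac{16s}{3(s^2+\pi^2)}\le\tfrac{16s}{3\pi^2}$; for $s\ge 3/5$ the left side exceeds the global maximum $\tfrac{8}{3\pi}$ of the right side). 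Second, your opening reduction is wrong in the branch $M<1$, $\epsilon<1$: there $M\epsilon^{\mu}<M$, so "nothing to prove" fails. In fact the lemma as stated is false for $M<1$ (take $p\equiv\epsilon=M<1$); $M\ge 1$ is an implicit hypothesis, and it is exactly what your two-constants step uses when you invoke $\log M\ge 0$ on the slit. This is harmless in the paper's application, where the lemma is applied with a generic a priori bound, but your reduction as written does not supply it. With these two points amended, your proof is complete and is essentially the cited proof.
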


\begin{lemma}\label{eigenfunction_est1}
The following estimate holds in $\mathbb R^n$:
\begin{align}\label{boundary_estimate}
\|\partial_\nu \phi_j\|_{L^2(\partial \Omega)}\leq C(n)\kappa_j,\quad j\in\mathbb N,
\end{align}
where the positive constant $C(n)$ is independent of $j$ and dependent on the dimension $n$. Moreover, the Dirichlet eigenvalues $\{\mu_j\}_{j=1}^\infty$ satisfy the Weyl-type inequality
\begin{align}\label{weyl}
E_1 j^{2/n}\leq \mu_j\leq E_2 j^{2/n},
\end{align}
where $E_1$ and $E_2$ are two positive constants independent of $j$.
\end{lemma}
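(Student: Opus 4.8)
The plan is to establish \eqref{boundary_estimate} via a Rellich–Pohozaev (multiplier) identity and to obtain \eqref{weyl} from the classical Weyl law for Dirichlet eigenvalues of a Schr\"odinger operator on a bounded domain. These two assertions are essentially independent, so I would treat them separately.

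\emph{The boundary estimate.} The key step is a Rellich-type identity obtained by multiplying the eigenvalue equation $(-\Delta+V)\phi_j=\mu_j\phi_j$ by $(x\cdot\nabla)\phi_j$ and integrating over $\Omega$. Integration by parts, together with the Dirichlet condition $\phi_j=0$ on $\partial\Omega$ (which forces $\nabla\phi_j=(\partial_\nu\phi_j)\nu$ on the boundary), produces a boundary term of the form $\tfrac12\int_{\partial\Omega}(x\cdot\nu)|\partial_\nu\phi_j|^2\,{\rm d}s$. I would then solve this identity for the boundary integral and bound the resulting interior terms, which involve $\|\nabla\phi_j\|_{L^2(\Omega)}^2$, $\mu_j\|\phi_j\|_{L^2(\Omega)}^2$, and terms coming from $V$. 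Using the normalization $\|\phi_j\|_{L^2(\Omega)}=1$ and the Garding-type relation $\|\nabla\phi_j\|_{L^2(\Omega)}^2=\mu_j-\int_\Omega V|\phi_j|^2\lesssim \mu_j$ (valid since $V\in L^\infty$ and $V\ge 0$), every interior term is controlled by $C\mu_j=C\kappa_j^2$. Finally, the convexity of $\Omega$ lets me choose the origin inside $\Omega$ so that $x\cdot\nu\geq c_0>0$ on $\partial\Omega$, which turns the left-hand boundary term into a lower bound $c_0\|\partial_\nu\phi_j\|_{L^2(\partial\Omega)}^2$; rearranging gives $\|\partial_\nu\phi_j\|_{L^2(\partial\Omega)}^2\lesssim \kappa_j^2$ and hence \eqref{boundary_estimate}.

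\emph{The Weyl inequality.} For \eqref{weyl} I would invoke the standard two-sided Weyl asymptotics for the Dirichlet Laplacian on a bounded domain, namely that the counting function $N(\mu)=\#\{j:\mu_j\le\mu\}$ satisfies $N(\mu)\sim c_n|\Omega|\,\mu^{n/2}$. Since $V\in L^\infty(\Omega)$ with $V\ge 0$, a min–max comparison shows the eigenvalues of $-\Delta+V$ differ from those of $-\Delta$ by at most $\|V\|_{L^\infty}$, which does not affect the leading-order growth rate. Inverting the asymptotic relation $N(\mu_j)=j$ yields $\mu_j\asymp j^{2/n}$, giving the constants $E_1,E_2$ in \eqref{weyl}.

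\emph{The main obstacle.} I expect the delicate point to be the Rellich identity rather than the Weyl law. Specifically, controlling the boundary term cleanly requires that $x\cdot\nu$ be bounded below by a positive constant on $\partial\Omega$; this is precisely where the convexity of $\Omega$ (assumed in the setup) is used, by placing the origin in the interior. A secondary subtlety is handling the contribution of $V$ in the Rellich identity: the term $\int_\Omega (x\cdot\nabla V)|\phi_j|^2$ arises after integrating by parts, and if $V$ is merely $L^\infty$ this gradient is not pointwise defined. I would circumvent this by integrating the $V$-term differently—keeping $\int_\Omega V\,(x\cdot\nabla)|\phi_j|^2$ and estimating it directly via Cauchy–Schwarz against $\|\nabla\phi_j\|_{L^2(\Omega)}\lesssim\kappa_j$—so that only the $L^\infty$ norm of $V$ enters and no derivative of $V$ is needed.
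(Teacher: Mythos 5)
Your proposal is correct, and both halves follow the same overall strategy as the paper: a Rellich-type multiplier identity for the boundary estimate, and a min--max comparison combined with Weyl's law for the two-sided eigenvalue bound. The difference is in the choice of multiplier. The paper invokes the abstract commutator identity $\int_\Omega u[P,A]u\,{\rm d}x=\int_{\partial\Omega}\partial_\nu u\,Au\,{\rm d}s$ of Hassell--Tao and takes $A=\tilde\nu(x)\cdot\nabla$, where $\tilde\nu$ extends the unit normal into $\Omega$ (convexity is used there to define $\nu$ as a function of the direction $\hat x$); this makes the boundary term exactly $\|\partial_\nu\phi_j\|^2_{L^2(\partial\Omega)}$ with no weight, and the interior commutator term is bounded by $C\|\nabla\phi_j\|^2_{L^2(\Omega)}\leq C\mu_j$ after integration by parts. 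You instead use the classical Pohozaev multiplier $x\cdot\nabla\phi_j$, which produces the weighted boundary term $\tfrac12\int_{\partial\Omega}(x\cdot\nu)|\partial_\nu\phi_j|^2\,{\rm d}s$, so convexity enters differently for you: it guarantees $x\cdot\nu\geq c_0>0$ once the origin is placed inside $\Omega$. Your variant is more self-contained and elementary (no construction of a smooth normal extension, which is the delicate point in the paper's argument for a merely convex domain), at the cost of carrying the weight $x\cdot\nu$. Two details in your sketch are exactly the right ones to be careful about: the term $\mu_j\int_\Omega \phi_j(x\cdot\nabla\phi_j)\,{\rm d}x$ must be integrated by parts to yield $-\tfrac{n}{2}\mu_j\|\phi_j\|^2_{L^2(\Omega)}$ (a crude Cauchy--Schwarz bound would give the useless $\mu_j^{3/2}$), and your direct Cauchy--Schwarz treatment of $\int_\Omega V\phi_j(x\cdot\nabla\phi_j)\,{\rm d}x$, which avoids differentiating $V\in L^\infty$, parallels what the paper does when it integrates by parts to reach the form $\int_\Omega(B_1\phi_j)(B_2\phi_j)\,{\rm d}x$. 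The Weyl part is identical in substance: the paper sandwiches the eigenvalues of $-\Delta+V$ between those of $-\Delta+C_1$ and $-\Delta+C_2$ via min--max and applies Weyl's law to each, exactly as you compare with $-\Delta$ and $-\Delta+\|V\|_{L^\infty}$.
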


\begin{proof}
We begin with the estimate \eqref{boundary_estimate} for the eigenfunctions on the boundary.
Let $u$ be a Dirichlet eigenfunction of $P=-\Delta + V$ in $\Omega$. For any
differential operator $A$, by \cite[Lemma 2.1]{hassell2002upper}, we have the
Rellich-type identity 
\begin{equation}\label{rellich}
\int_{\Omega}u[P,A]u{\rm d}x=\int_{\partial \Omega}\partial_\nu
u Au{\rm d}s,
\end{equation}
where $[P, A]=PA-AP$. In fact, let $\lambda$ be the eigenvalue corresponding to
the eigenfunction $u$. We have $[P,A]=[P-\lambda,A]$ and $(P-\lambda)u=0$. A
simple calculation yields
\begin{align*}
&\int_{\Omega} u[P,A]u{\rm d}x\\
&=\int_{\Omega}\left[u(-\Delta Au + VAu)-\lambda  uAu\right]{\rm d}x\\
&=\int_{\Omega}\left[-\Delta u Au + VuAu - \lambda uAu\right]{\rm d}x+\int_{\partial
\Omega}\partial_\nu u Au {\rm d}s\\
&=\int_{\Omega}(P-\lambda)u Au{\rm d}x+\int_{\partial \Omega}\partial_\nu
u Au{\rm d}s\\
&=\int_{\partial \Omega}\partial_\nu u Au{\rm d}s.
\end{align*}

Now let $u$ be a normalized Dirichlet eigenfunction with eigenvalue $\lambda$. 
Since $\Omega$ is a convex domain, we take the normal vector $\nu(x)$ on $\partial\Omega$ as a function defined on the unit sphere
$\mathbb S^{n - 1}$ such that $\nu(x) = \nu (\hat{x})$.  Then we define a function $\tilde{\nu}(x)$ such that $\tilde{\nu}(x) = \nu(\hat{x})$ for $x\in\Omega$.
Choose $A = \tilde{\nu}(x)\cdot\nabla$. It is clear that the right hand side of
\eqref{rellich} is exactly $\|\partial_\nu u\|^2_{L^2(\partial \Omega)}$. It follows
from the integration by parts that the left hand side of \eqref{rellich} can be
written as
\[
\int_{\Omega}(B_1u)(B_2u){\rm d}x,
\]
where $B_1$ and $B_2$ are two first order differential operators. The lemma is
proved by observing that 
 \[
 \int_{\Omega}(B_1u)(B_2u){\rm d}x\leq C\int_{\Omega}\nabla u\cdot \nabla
u\,\mathrm{d}x\leq C\lambda,
 \]
where the positive constant $C$ does not depend on $\lambda$.

Next, we prove the Weyl-type inequality \eqref{weyl}.
Let $\{\mu_j, \phi_j\}_{j=1}^\infty$ be the positive increasing eigenvalues and eigenfunctions of $-\Delta + V$ in $\Omega$.
Then we have following min-max principle: 
\[
\mu_j=\sup_{\phi_1,\cdots,\phi_{j-1}}\inf_{\psi\in[\phi_1,\cdots,
\phi_{j-1}]^\perp\atop \psi\in H_0^1(\Omega)}\frac{\int_{\Omega}|\nabla
\psi|^2 + V|\psi|^2{\rm d}x}{\int_{\Omega}\psi^2{\rm d}x}.
\]
Assume $C_1<\|V(x)\|_{L^\infty(\Omega)}<C_2$ on $\Omega$, where $C_1,C_2$ are two constants. Assume $
\mu_1^{(k)}\leq\mu_2^{(k)}\leq\cdots$ are the eigenvalues for the operator
$-\Delta + C_k$ for $k=1,2$. By the min-max principle, we have
\[
\mu_j^{(2)}<\mu_j<\mu_j^{(1)}, \quad j=1, 2, \dots.
\]
We have from Weyl's law \cite{Weyl} for $-\Delta + C_k$ that 
\[
\lim_{j\rightarrow+\infty}\frac{\mu_j^{(k)}}{j^{2/n}}=D_k,
\]
where $D_k$ is a constant. Therefore there exist two constants $E_1$ and $E_2$
such that 
\[
E_1 j^{2/n}\leq \mu_j\leq E_2 j^{2/n},
\]
which completes the proof.
\end{proof}


\begin{thebibliography}{99}

\bibitem{AAB}
S. Acosta, R. Alonso, and L. Borcea, Source estimation with incoherent waves in random waveguides, Inverse Probl., 31 (2015) 035013.

\bibitem{AGA}
T. Arens, D. Gintides, and A. Lechleiter, Direct and inverse medium scattering in a three-dimensional homogeneous planar waveguide, SIAM J. Appl. Math., 71 (2011), 753--772.

\bibitem{BL}
G. Bao and P. Li, Maxwell's Equations in Periodic Structures, Series on Applied Mathematical Sciences, Springer, to appear. 

\bibitem{blt}
G. Bao, J. Lin, and F. Triki, A multi-frequency inverse source problem, J.
Differential Equations, 249 (2010), 3443--3465.

\bibitem{blz}
G. Bao, P. Li, and Y. Zhao, Stability for the inverse source problems in elastic
and electromagnetic waves, J. Math. Pures Appl., 134 (2020), 122--178.

\bibitem{BT}
G. Bao and F. Triki, Stability for the multifrequency inverse medium problem, J. Differential
Equations, 269 (2020), 7106--7128.

\bibitem{bc-77}
N. Bleistein and J. K. Cohen, Nonuniqueness in the inverse source problem in
acoustics and electromagnetics, J. Math. Phys., 18 (1977), 194--201.

\bibitem{BCM}
L. Borcea, F. Cakoni, and S. Meng, A direct approach to imaging in a waveguide with perturbed geometry, J. Comput. Phys., 392 (2019) 556--577.
 
\bibitem{BIT}
L. Borcea, L. Issa, and C. Tsogka, Source localization in random acoustic waveguides, Multiscale Model. Simul., 8 (2010), 
1981--2022. 

\bibitem{BGWX}
J. L. Buchanan, R. P. Gilbert, A. Wirgin, and Y. Xu, Marine Acoustics: Direct and
Inverse Problems, SIAM, Philadelphia, 2004.

\bibitem{CCH}
F. Cakoni, D. Colton, and H. Haddar, A duality between scattering poles and transmission
eigenvalues in scattering theory, Proc. R. Soc. A, 476 (2020), 20200612.

\bibitem{cheng2016increasing}
J. Cheng, V. Isakov, and S. Lu, Increasing stability in the inverse source
problem with many frequencies, J. Differential Equations, 260 (2016),
4786--4804.

\bibitem{collin}
R. E. Collin, Field Theory of Guided Waves, vol. 2, IEEE Press, New York, 1991.

\bibitem{CK}
D. Colton and R. Kress, Inverse Acoustic and Electromagnetic Scattering Theory, Applied Mathematical Sciences, vol. 93, Springer-Verlag, Berlin, 1998.

\bibitem{ds-82}
A. Devaney and G. Sherman, Nonuniqueness in inverse source and scattering
problems, IEEE Trans. Antennas Propag., 30 (1982), 1034--1037.

\bibitem{Dyatlov}
S. Dyatlov and M. Zworski, Mathematical Theory of Scattering Resonances, vol. 200, American
Mathematical Soc., 2019.

\bibitem{ei-18}
M. Entekhabi and V. Isakov, On increasing stability in the two dimensional
inverse source scattering problem with many frequencies, Inverse Problems, 34
(2018), 055005.

\bibitem{ei-19}
M. Entekhabi and V. Isakov, Increasing stability in acoustic and elastic inverse
source problems, SIAM J. Math. Anal., 52 (2020), 5232--5256. 

\bibitem{FY06}
D.  Finco and K. Yajima, The $L^p$ boundedness of wave operators for Schr\"{o}dinger operators with threshold singularities, II. Even dimensional case, J. Math. Sci. Univ. Tokyo, 13 (2006), 277--346. 


\bibitem{hassell2002upper}
A. Hassell and T. Tao, Upper and lower bounds for normal derivatives of Dirichlet eigenfunctions, arXiv:math/0202140.  

\bibitem{KLU}
K. Krupchyh, M. Lassas, and G. Uhlmann, Inverse problems with partial data for a magnetic Schr\"{o}dinger operator in an infinite slab and on a bounded domain, Comm. Math. Phys., 312 (2012), 87--126.

\bibitem{LYZ}
P. Li, X. Yao, and Y. Zhao, Stability for an inverse source problem of the biharmonic operator, arXiv:2102.04631.

\bibitem{li2017increasing}
P. Li and G. Yuan, Increasing stability for the inverse source scattering
problem with multi-frequencies, Inverse Problems and Imaging, 11 (2017),
745--759. 

\bibitem{LZZ}
P. Li, J. Zhai, and Y. Zhao, Stability for the acoustic inverse source problem in inhomogeneous media, SIAM J. Appl. Math., 80 (2020), 2547--2559.

\bibitem{LZ}
P. Li and Y. Zhao, Stability for the electromagnetic inverse source problem in
inhomogeneous medium, preprint.

\bibitem{LU}
X. Li and G.  Uhlmann, Inverse problems with partial data in a slab, Inverse Probl. Imaging, 4 (2010), 449--462.

\bibitem{liu}
K. Liu, Fast imaging of sources and scatterers in a stratified ocean waveguide, SIAM J. Imaging Sci., 14 (2021), 224--245.

\bibitem{LXZ}
K. Liu, Y. Xu, and J. Zou, A multilevel sampling method for detecting sources in a stratified ocean waveguide, J. Comput. Appl. Math., 309 (2017), 95--110. 

\bibitem{marcuse}
D. Marcuse, Theory of Dielectric Optical Waveguides, Academic Press, New York, 1974.

\bibitem{MS}
P. Monk and V. Selgas, Sampling type methods for an inverse waveguide problem, Inverse Probl. Imaging, 6 (2012), 709--747.

\bibitem{Weyl}
H. Weyl, Das asymptotische verteilungsgesetz der eigenwerte linear partieller differentialgleichungen, Math. Ann., 71 (1911), 441--479.

\end{thebibliography}
\end{document}